%% file: manuscript_arxiv.tex
\documentclass[preprint,11pt,authoryear]{elsarticle}
\usepackage[T1]{fontenc}
\usepackage{lmodern}
\usepackage{amsmath,amssymb,amsthm}
\usepackage{graphicx}
\usepackage{algorithm}
\usepackage{algpseudocode}
\usepackage[hidelinks]{hyperref}
\usepackage{booktabs}
\usepackage{placeins}

\newtheorem{lemma}{Lemma}
\newtheorem{proposition}[lemma]{Proposition}
\newtheorem{theorem}[lemma]{Theorem}
\newtheorem{corollary}{Corollary}
\theoremstyle{remark}
\newtheorem{remark}{Remark}
\theoremstyle{definition}

\newcommand{\E}{\mathbb{E}}
\newcommand{\R}{\mathbb{R}}
\newcommand{\had}[1]{^{\circ #1}}

\journal{European Journal of Operational Research}

\begin{document}

\begin{frontmatter}

\title{Feasible closed form moment matching for scenario generation through covariance rotations}

\author[1]{Juan Pablo Contreras \corref{mycorrespondingauthor}}
\cortext[mycorrespondingauthor]{Corresponding author}
\ead{juan.contreras@udp.cl}
\address[1]{Escuela de Ingenier\'ia Industrial, Universidad Diego Portales, Ej\'ercito 441, Santiago, Chile}

\begin{abstract}
Closed form moment matching constructs small scenario sets without numerical
optimisation. We study two generators that match the mean, the covariance
matrix, and the marginal third and fourth moments. Both place their
scenarios along the columns of a covariance factor, usually the symmetric or
Cholesky root. Although every valid factor reproduces the same covariance
matrix, its choice can determine whether the resulting weights are
nonnegative. By viewing each column as a univariate mixture component,
Pearson's inequality gives the minimum probability needed to reproduce its
assigned variance, skewness, and kurtosis, which leads to exact feasibility
conditions whenever the relevant entrywise powers are nonsingular. The
effect of the factor is governed by its entrywise fourth powers, whose sum
can vary by a factor equal to the dimension. This explains why standard
roots fail in high dimension. We combine these results into a rotated moment
matching procedure that searches for feasible factors without altering the
target moments. Applied to stock returns and synthetic data, the procedure
produces valid probabilities for both generators in nearly every instance
considered, and it outperforms the original constructions, whose standard
factors give invalid weights in most of them. Finally, the two asset case
explains when such a rotation exists. With symmetric marginals, a suitable
rotation reaches Pearson's bound except in one degenerate case, whereas with
skewness the factor must balance the allocation of third and fourth moments,
and some attainable targets admit no feasible rotation. We also establish a
sufficient condition for arbitrary correlation and equal marginal kurtosis.
\end{abstract}

\begin{keyword}
Stochastic programming \sep Scenario generation \sep Moment matching \sep
Sigma points \sep Pearson's inequality
\end{keyword}

\end{frontmatter}

\section{Introduction}\label{sec:intro}

Stochastic programs replace an uncertain distribution with a finite set of
scenarios whose statistical quality can materially affect decisions in
portfolio selection, asset liability management, capacity planning, and
many other applications. Their computational cost is equally important
because a rolling study requires a new scenario set at every decision date,
while a multistage model may require one at every node.

Moment matching constructs scenarios that reproduce a small collection of
statistics regarded as reliable, usually the mean, covariance, marginal
skewness, and marginal kurtosis \citep{hoylandwallace2001}. Most methods
obtain such scenarios by solving an optimisation problem, whereas closed
form generators adapt sigma point constructions from nonlinear filtering and
express every point and weight directly in terms of the targets. Among the
latter, \citet{prd2015}, hereafter PRD, match the mean, covariance, and the
averages of the marginal third and fourth moments, while the generalised
unscented transform of \citet{genut2025}, hereafter GenUT, matches every
marginal third and fourth moment with \(2n+1\) points for \(n\) assets, so
both methods are exact and grow only linearly with the dimension.

The main practical difficulty is that some resulting weights can be
negative. \citet{cbh2018} address this for PRD by deriving parameter choices
that prevent the problem whenever such choices exist. Higher order filtering
methods sometimes raise a kurtosis target until their formulas become real
\citep{house2021,yang2024}, and GenUT implements the same adjustment in its
code, although changing the target also changes the distribution being
represented and does not control the centre weight. Negative weights may
remain usable for numerical integration or filtering, but they cannot
define scenarios for a stochastic program.

This paper identifies the covariance factor as a central part of the
feasibility question because both generators place points along the
columns of a matrix \(L\) satisfying \(LL^\top=\Sigma\). Since every such
factor can be written as \(L=\Sigma^{1/2}Q\) for an orthogonal matrix \(Q\),
a rotation preserves the mean and covariance
while changing how the third and fourth moments are allocated among columns.
Only the choice of factor requires a numerical optimisation over the
orthogonal group, and once the factor is fixed the scenarios and their
probabilities remain explicit functions of the targets.

As an example, consider two assets with correlation \(0.4\), marginal skewness \(0.3\), and
raw marginal kurtosis \(1.6\). These marginal moments satisfy Pearson's
inequality, but this alone does not ensure that GenUT produces nonnegative
weights. Figure~\ref{fig:geometry} compares two covariance factors for the
same target moments. Each colour identifies the pair of points generated by
one column of the factor, marker area represents the corresponding weight,
and the ellipse shows the two standard deviation contour. With the symmetric
factor in panel (a), the noncentral points require a total weight of \(1.22\),
which forces the remaining weight at the mean to be \(-0.22\). The resulting
scenario set therefore matches the target moments but does not define a
probability distribution. Rotating the factor toward the principal
directions changes how the third and fourth moments are allocated between
the two column components. In panel (b), the noncentral points require only
\(0.68\) of the total probability, leaving a positive centre weight of
\(0.32\). The rotation thus preserves the covariance, the marginal moments,
and the number of scenarios while restoring probabilistic feasibility.

\begin{figure}[t]
\centering
\includegraphics[scale=0.8]{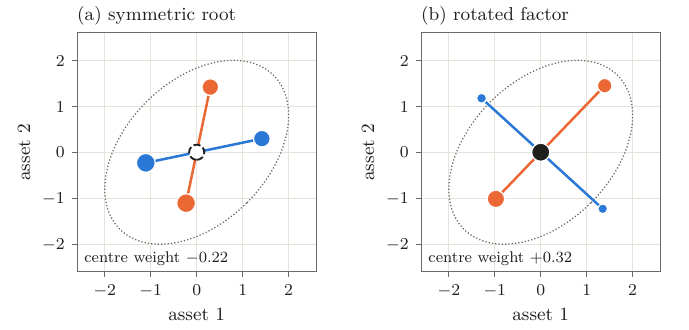}
\caption{Effect of the covariance factor on GenUT feasibility for the two
asset targets \(r=0.4\), \(S=0.3\), and \(K=1.6\), with the symmetric root in
(a) and a factor rotated close to the principal directions in (b). Marker
areas are proportional to the weights, and a dashed open marker shows a
negative centre weight.}
\label{fig:geometry}
\end{figure}

The analysis rests on the observation that each column is a one
dimensional mixture component, and Pearson's inequality, applied to such a
component together with its probability, gives the minimum mass required
for prescribed second, third, and fourth moment contributions.

We use this observation to obtain four main results. First, we establish a
general feasibility principle for scenario generators assembled from
univariate mixture components, which determines the minimum probability
required by each component from its assigned second, third, and fourth
moments. This principle yields exact conditions for column based generators
with nonsingular entrywise powers, recovers the optimal PRD probabilities,
and identifies the probability constraint missing from the published GenUT
check.

Second, we show that factor concentration is governed by the sum of the
fourth powers of its entries, which can vary by a factor equal to the
dimension, so that diffuse factors reduce the resulting dimension penalty,
and we combine these results into a rotated moment matching method that
searches over factors before any target is modified. Third, numerical
experiments using equity returns and synthetic data confirm the effect of
factor concentration and show that the method makes both generators valid on
every dataset tested except one on which no factor of the PRD form can
succeed. Fourth, for two assets with symmetric marginals we show that
rotation reaches Pearson's bound except in one limiting case, whereas with
skewness rotation creates a tradeoff between third and fourth moments, for
which we give a sufficient condition covering arbitrary correlation and equal
marginal kurtosis, and we illustrate these thresholds numerically.

The rest of the paper is organised as follows. Section~\ref{sec:literature}
reviews the related literature. Section~\ref{sec:setting} presents the two
constructions and their exact feasibility conditions, studies in
Section~\ref{sec:size} how the covariance factor changes those conditions,
and closes with the rotated moment matching method.
Section~\ref{sec:numerics} evaluates the method on real and synthetic data,
while Section~\ref{sec:two} establishes the two asset existence results that
explain when rotation succeeds. Section~\ref{sec:conclusion} concludes, and
the proofs not given in the main text are collected in \ref{app:proofs}.

\section{Related literature}\label{sec:literature}

Scenario generation methods for stochastic programming range from sampling
and quasi Monte Carlo schemes to moment matching, approximations based on
probability metrics, and problem driven constructions, and broad treatments
are given by \citet{kautwallace2007}, \citet{king2024}, and \citet{chou2023}.
The empirical comparison of \citet{lohndorf2016} shows that the relative
performance of these families depends on the problem and on the number of
scenarios, while importance sampling can reduce the number of scenarios
required in intertemporal portfolio choice by a factor of several hundred
\citep{ekblom2020}. Generators are usually assessed through the in sample and
out of sample stability of the optimal value \citep{kautwallace2007} or
through the quality of the resulting decisions \citep{keutchayan2017}. These
evaluation frameworks apply to rotated scenario sets as to any other, and
because different rotations preserve the matched moments while changing the
unmatched comoments and therefore possibly the decisions, we leave decision
quality aside and concentrate on the prior question of whether a valid set
exists.

Optimisation based moment matching was introduced by
\citet{hoylandwallace2001}, who minimise a weighted discrepancy between
scenario and target moments, and was made scalable by the heuristic of
\citet{hkw2003}, which alternates marginal transformations with corrections
of the correlation structure. \citet{mehrotrapapp2013} match moments exactly
through column generation, \citet{staino2015} solve the moment equations of
arbitrage free scenario trees with the monomial method, and
\citet{tanaka2015} discretise distributions with exact moments by minimising
relative entropy, which yields positive weights by construction. More recent
work selects scenarios from historical data through moment based
optimisation \citep{kaut2021}, formulates distribution and moment matching
as a mixed integer linear program \citep{bounitsis2022}, and reduces scenario
sets while preserving higher order moments in portfolio applications
\citep{zhang2022}, and moment matching remains a standard choice in energy
system models \citep{backe2021}. These methods obtain valid probabilities
through numerical optimisation whose cost recurs at every decision date or
tree node, while quadrature based alternatives such as sparse grids
\citep{chen2015} can themselves produce negative weights.

Closed form constructions originate in the unscented transform and its
higher order extensions
\citep{julier1998,tenne2003,julier2004,liu2014,menegaz2015,adurthi2018,easleyberry2021},
which place a small number of weighted points so as to reproduce prescribed
moments. \citet{datemamon2008} used such constructions to sample from
partially specified symmetric distributions, \citet{ponomarevadate2013}
matched averaged marginal skewness and kurtosis in a higher order sigma point
filter, and PRD carried this design to scenario generation for portfolio
optimisation, after which \citet{cbh2018} derived the probability allocation
that avoids negative weights in PRD whenever such an allocation exists. GenUT
matches every marginal third and fourth moment and has so far been used in
filtering and uncertainty quantification rather than in scenario generation.
Negative weights recur throughout this literature. The higher order unscented
estimator of \citet{house2021}, as described by \citet{yang2024}, keeps its
last weight nonnegative by raising each normalised kurtosis \(K_j\) to at
least \(n/(1-\delta_H)+S_j^2\) for a control parameter \(\delta_H\), so that its
requirement grows with the dimension, while the square root variant of
\citet{yang2024} accepts a negative weight and absorbs it through a downdate
of the covariance factor, which is admissible in filtering but not in a
stochastic program.
Closed form feasibility conditions for small scenario sets also appear in
operational research, where \citet{geyer2014a} derive bounds under which
scenario sets that match a covariance matrix with the least possible number
of scenarios are free of arbitrage.

The freedom to rotate a sigma point set is well known in nonlinear filtering,
where the matrix square root and its rotation are chosen to improve
integration or filtering accuracy
\citep{julier2002,straka2013,dunik2014,dunik2016}. Random orthogonal matrix
simulation rotates samples with equal probabilities so that they reproduce a
mean and covariance exactly \citep{rom2011}, and the choice of rotation
changes the marginal higher moments of the simulated returns
\citep{ledermann2012}. Within this framework, rotations have been chosen to
target multivariate skewness \citep{hanke2017,amw2022} and to produce
arbitrage free samples by rotating a simplex \citep{geyer2014b}, while
\citet{alexander2024} give necessary and sufficient conditions for matching
the multivariate skewness and kurtosis measures of Kollo and show that a
suitable orthonormal basis simplifies them. \citet{foldnes2016} solve the
same linear systems in entrywise powers for continuous simulation and choose
between the symmetric and Cholesky roots without optimising the factor, and
minimising the sum of fourth powers of a factor is the opposite of the
quartimax criterion of factor analysis \citep{quartimax1954}. To our
knowledge, no previous work chooses the covariance factor so that the
probabilities of a moment matching scenario set are nonnegative, which is the
role the rotation plays in this paper.

Approaches based on probability metrics or on the decision problem itself
address a different setting. Scenario reduction with respect to probability
metrics \citep{pflug2001,dupacova2003} has known fundamental limits and
approximation guarantees \citep{rujeerapaiboon2022}, problem based
generation approximates the expected recourse function directly
\citep{henrion2022}, and optimisation based reduction exploits the structure
of two stage problems \citep{bertsimas2023}. Problem driven methods place
scenarios where tail risk measures are determined \citep{fairbrother2022} or
decompose the output distributions of candidate decisions
\citep{narum2024}, while learning based methods train generative models on
empirical scenarios \citep{pisinger2026} or search for diverse scenario sets
with genetic algorithms \citep{oliveira2022}. These methods require a richer
description of the distribution or access to the decision model, whereas
closed form moment matching serves the common situation in which only a few
moments are trusted and scenario sets must be regenerated frequently, and a
valid moment matched set can also serve as their input.

\section{Moment matching and component feasibility}\label{sec:setting}

Let \(\mu\in\R^n\) be the target mean, let \(\Sigma\) be a positive
definite covariance matrix, and let \(m_3,m_4\in\R^n\) contain the marginal
central moments of orders three and four. We write
\(\sigma_j=\sqrt{\Sigma_{jj}}\), \(S_j=m_{3j}/\sigma_j^3\), and
\(K_j=m_{4j}/\sigma_j^4\), where \(S_j\) and \(K_j\) denote marginal
skewness and raw kurtosis. Every marginal distribution satisfies Pearson's
inequality
\begin{equation}\label{eq:pearson-marg}
  K_j\ge S_j^2+1,
\end{equation}
with equality only for a two point distribution \citep{pearson1916}. This is
a natural benchmark because no scenario generator can reproduce targets that
violate it. For a matrix or vector \(B\), the notation \(B\had{k}\) denotes
its entrywise \(k\)th power.

Every covariance factor has the form \(L=\Sigma^{1/2}Q\), where
\(QQ^\top=I\), so selecting a factor is equivalent to selecting an
orthogonal rotation. The generators considered below place scenarios along
directions \(\ell\) through the mean. A component with probability \(\pi\)
has the form \(X=\mu+T\ell\), where \(T\) is a scalar displacement satisfying
\(\E[T]=0\). It contributes
\(\pi\E[T^2]\ell\ell^\top\) to the covariance and
\(\pi\E[T^k]\ell\had{k}\) to the marginal central moments of order \(k\).
This separates the geometry of the direction from the univariate moments
generated along it.

\subsection{Two particular constructions}

GenUT assigns one component to every column \(L_i\) of a factor satisfying
\(LL^\top=\Sigma\). Each component contributes unit variance along its
column, while \(u_i\) and \(w_i\) denote its third and fourth moment
contributions. Matching all marginal moments requires
\[
  m_3=L\had3u,\qquad m_4=L\had4w.
\]
When \(L\had3\) and \(L\had4\) are nonsingular, the factor uniquely
determines \(u\) and \(w\). We call any mixture of one component along each
column of \(L\), with arbitrary displacement laws, together with an atom at
\(\mu\), a column component distribution. GenUT is the case of two point
displacements, which gives \(2n+1\) scenarios, and although any covariance
factor can be used, the published implementation selects the symmetric
root.

PRD instead match the aggregate marginal moments
\(M_3=\sum_jm_{3j}\) and \(M_4=\sum_jm_{4j}\). They select a direction
\(Z\in\R^n\) and factor the remaining covariance as
\(LL^\top=\Sigma-ZZ^\top\). For every column \(L_i\) and level
\(k=1,\ldots,s\), the method places two symmetric points at
\[
  \mu\pm\frac{L_i}{\sqrt{2sq_k}},
\]
with probability \(q_k>0\) assigned to each point. These pairs have total
probability \(t=2n\sum_kq_k\), which we require to be below one, reproduce
\(LL^\top\), and contribute no
third moment. The remaining probability \(1-t\) is assigned to a three point
component along \(Z\), which reproduces \(ZZ^\top\), carries all of \(M_3\),
and supplies the part of \(M_4\) not used by the symmetric pairs. The
original method chooses \(Z=\rho\sigma\) for a scalar \(\rho>0\), where
\(\sigma\) is the vector of marginal standard deviations. We write \(\zeta_3=\sum_jZ_j^3\),
\(\zeta_4=\sum_jZ_j^4\), and
\(\Lambda=\sum_{i,j}L_{ji}^4\), and assume \(\zeta_3\ne0\).

\subsection{A component feasibility principle}\label{sec:component}

Consider a component of probability \(\pi\in(0,1]\). Let \(T\) be its
displacement and suppose that its contributions to moments of orders two,
three, and four are \(c_k=\pi\E[T^k]\). The conditional variance, skewness,
and kurtosis are then \(c_2/\pi\), \(\sqrt\pi c_3/c_2^{3/2}\), and
\(\pi c_4/c_2^2\), and applying Pearson's inequality to this conditional law
gives the following result.

\begin{lemma}\label{lem:pearson}
Let \(\pi\in(0,1]\), \(c_2>0\), and \(c_3,c_4\in\R\). A mean zero random
variable \(T\) satisfying \(\pi\E[T^k]=c_k\) for \(k=2,3,4\) exists if and
only if
\begin{equation}\label{eq:pearson}
  c_2c_4-c_3^2\ge\frac{c_2^3}{\pi}.
\end{equation}
Equality holds if and only if \(T\) has two support points. Every such
variable satisfies
\begin{equation}\label{eq:offmass}
  \pi\Pr(T\ne0)\ge\frac{c_2^3}{c_2c_4-c_3^2},
\end{equation}
and when \eqref{eq:pearson} holds, a law with two nonzero support points and,
when the inequality is strict, an atom at zero attains the targets with
equality in \eqref{eq:offmass}.
\end{lemma}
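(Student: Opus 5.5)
Writing \(m_k=\E[T^k]=c_k/\pi\), the moment conditions say that \(T\) has mean zero, variance \(m_2=c_2/\pi>0\), and third and fourth moments \(m_3,m_4\). Pearson's inequality \eqref{eq:pearson-marg}, applied to the standardised variable \(T/\sqrt{m_2}\), reads \(m_2m_4-m_3^2\ge m_2^3\). Multiplying by \(\pi^2\) turns this into \(c_2c_4-c_3^2\ge c_2^3/\pi\), which is exactly \eqref{eq:pearson}, with equality precisely for two point laws. This gives necessity and the equality characterisation.

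For sufficiency I would construct a law directly rather than quote existence results for moment problems.

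\emph{Equality case.} A mean zero two point law with prescribed \(m_2\) and \(m_3\) always exists: take points \(a>0>b\) with \(a+b=m_3/m_2\) and \(ab=-m_2\), and probabilities \(-b/(a-b)\) and \(a/(a-b)\). Its fourth moment is \(m_4=m_3^2/m_2+m_2^2\), so it meets the targets exactly when \eqref{eq:pearson} holds with equality.

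\emph{Strict case.} Mix that kind of two point law, with probability \(p\in(0,1)\), with an atom at zero. The nonzero part must then have moments \(m_k/p\), and equality in Pearson's inequality for this part becomes
\[
(m_2/p)(m_4/p)-(m_3/p)^2=(m_2/p)^3,\qquad\text{that is,}\qquad p=\frac{m_2^3}{m_2m_4-m_3^2}.
\]
This value of \(p\) lies in \((0,1)\) exactly under strict inequality. Every two point law is the equality case, so any \(T\) with two support points is necessarily the equality case. Conversely, the mixture has three support points. Hence equality holds iff \(T\) has two support points.

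\emph{The mass bound \eqref{eq:offmass}.} For an arbitrary \(T\), set \(p=\Pr(T\ne0)\); this is positive because \(m_2>0\). Condition on \(T\ne0\): the conditional law has mean zero and moments \(m_k/p\), and applying Pearson's inequality to it gives
\[
p\,(m_2m_4-m_3^2)\ge m_2^3.
\]
Converting back with \(m_k=c_k/\pi\) yields
\[
p\ge \frac{\pi\,c_2^3}{c_2c_4-c_3^2},
\]
where the denominator is positive by \eqref{eq:pearson}. Multiplying by \(\pi\) gives \(\pi\Pr(T\ne0)\ge\pi^2c_2^3/(c_2c_4-c_3^2)\). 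This does not match \eqref{eq:offmass}, which has no factor of \(\pi^2\). The intended reading, consistent with a component of total probability \(\pi\), is that \(\pi\Pr(T\ne0)\) is the absolute mass placed off the mean, and the inequality should be read with \(c_k\) as absolute contributions.

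So I would recheck the scaling carefully. Redoing the computation with absolute masses: the component puts mass \(q=\pi p\) on nonzero points, and its contributions are \(c_k=q\,\E[T^k\mid T\ne0]\). Pearson's inequality for the conditional law gives
\[
(c_2/q)(c_4/q)-(c_3/q)^2\ge(c_2/q)^3,
\]
i.e. \(q\ge c_2^3/(c_2c_4-c_3^2)\), which is exactly \eqref{eq:offmass}. The construction in the strict case attains it.

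The main obstacle is this bookkeeping between conditional moments \(m_k\), absolute contributions \(c_k\), and masses. I would carry out every step in terms of the absolute contributions \(c_k\) and the off-mean mass \(q\), which makes both \eqref{eq:pearson} (the case \(q\le\pi\)) and \eqref{eq:offmass} transparent.
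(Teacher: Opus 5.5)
Your proof is correct and follows the paper's route. Necessity comes from Pearson's inequality applied to \(T\). Sufficiency uses a two point law mixed with an atom at zero, with off-centre probability \(m_2^3/(m_2m_4-m_3^2)\in(0,1]\). The bound \eqref{eq:offmass} follows by conditioning on \(\{T\ne0\}\), which keeps every \(c_k\) and replaces \(\pi\) by \(q=\pi\Pr(T\ne0)\).

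The one difference is that you take Pearson's inequality and its equality case from the literature. The paper instead derives both from the moment matrix of \((1,T,T^2)\): its determinant is \(m_2m_4-m_3^2-m_2^3\), and if it is singular, \(T\) is supported on the roots of a nonzero polynomial of degree at most two. Your sentence \emph{Conversely, the mixture has three support points} does not prove that equality forces two support points for an arbitrary \(T\). That direction rests entirely on the cited equality case, so say so explicitly.

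You should delete the claim that \eqref{eq:offmass} is mis-scaled, because it comes from an algebra slip. With \(m_k=c_k/\pi\), the inequality \(p(m_2m_4-m_3^2)\ge m_2^3\) reads \(p(c_2c_4-c_3^2)/\pi^2\ge c_2^3/\pi^3\). This gives \(p\ge c_2^3/(\pi(c_2c_4-c_3^2))\), not \(\pi c_2^3/(c_2c_4-c_3^2)\), and multiplying by \(\pi\) yields \eqref{eq:offmass} exactly as stated. The \(c_k\) in the lemma are already absolute contributions, since \(c_k=\pi\E[T^k]=q\,\E[T^k\mid T\ne0]\). Your second computation is therefore the literal statement, not a reinterpretation.

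You also only say that the strict case attains the bound. In the equality case your two point law attains it too, because \(\Pr(T\ne0)=1\) and \(\pi=c_2^3/(c_2c_4-c_3^2)\). Allowing \(p=1\) in your mixture covers both cases at once, as the paper does.
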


The lemma gives a direct mass interpretation because a rare component must
use large displacements to contribute a fixed variance, which raises its
fourth moment contribution and implies that a component required to deliver
relatively little kurtosis must receive more probability. The minimum mass
in \eqref{eq:offmass} consequently depends only on the assigned moment
contributions, and it is attained by the law used in the constructions
below.

The condition is equivalent to positive semidefiniteness of the moment
matrix of \((1,T,T^2)\) and forms a special case of the truncated Hamburger
moment problem \citep{curtofialkow1991}, whose relevant rank condition holds
here because \(c_2>0\). Related moment conditions arise in linear programming
over probability measures \citep{prekopa1990}.

\begin{corollary}\label{cor:unit}
Let \(c_2=1\), \(c_3=u\), and \(c_4=w\). A component exists if and only if
\(w-u^2>0\) and \(\pi\ge1/(w-u^2)\), and its minimum mass away from zero is
\(1/(w-u^2)\).
\end{corollary}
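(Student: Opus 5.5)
The corollary is Lemma~\ref{lem:pearson} with \(c_2=1\), \(c_3=u\), \(c_4=w\), so the plan is to rewrite the lemma's conditions in this normalisation and check the two directions, taking care of the sign of \(w-u^2\). With \(c_2=1\), condition \eqref{eq:pearson} becomes
\[
  w-u^2\ge\frac{1}{\pi},
\]
and \eqref{eq:offmass} becomes \(\pi\Pr(T\ne0)\ge1/(w-u^2)\) whenever \(w-u^2>0\).

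For necessity, suppose a component with probability \(\pi\in(0,1]\) exists. Lemma~\ref{lem:pearson} gives \(w-u^2\ge 1/\pi\). Since \(1/\pi>0\), this forces \(w-u^2>0\). Multiplying both sides by \(\pi/(w-u^2)>0\) then gives the equivalent inequality \(\pi\ge1/(w-u^2)\).

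For sufficiency, assume \(w-u^2>0\) and \(\pi\ge1/(w-u^2)\). Multiplying back by \((w-u^2)/\pi>0\) recovers \(w-u^2\ge1/\pi\), which is \eqref{eq:pearson}. The lemma then supplies a mean zero \(T\) with the required contributions. The only care needed in this step is to multiply by quantities already known to be positive, so that no inequality is reversed. The condition \(w-u^2>0\) is stated separately because the inequality \(\pi\ge1/(w-u^2)\) alone is meaningless when \(w-u^2\le0\).

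For the minimum mass, \eqref{eq:offmass} with \(c_2=1\) shows that every admissible law satisfies \(\pi\Pr(T\ne0)\ge1/(w-u^2)\). The final clause of Lemma~\ref{lem:pearson} supplies a law attaining equality, namely two nonzero support points plus an atom at zero when the inequality is strict. Hence the infimum is attained and equals \(1/(w-u^2)\).
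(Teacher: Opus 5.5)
Your proposal is correct and follows the paper's own proof. You specialise Lemma~\ref{lem:pearson} to \(c_2=1\), note that \(w-u^2\ge1/\pi>0\) is equivalent to the stated pair of conditions, and read off the minimum mass and its attainment from \eqref{eq:offmass} and the lemma's construction. One wording point: when \(w-u^2<0\), the inequality \(\pi\ge1/(w-u^2)\) is not meaningless but trivially true, and this is exactly why the positivity condition must be stated separately.
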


\subsection{Feasibility for a fixed factor}\label{sec:exact}

Once the factor has determined the contributions \(u\) and \(w\) of the
column components, Corollary~\ref{cor:unit} gives the minimum probability
required by every component.

\begin{theorem}\label{thm:blocks}
Let \(L\) be a factor of \(\Sigma\) for which \(L\had3\) and \(L\had4\) are
nonsingular, and define \(u=(L\had3)^{-1}m_3\), \(w=(L\had4)^{-1}m_4\), and
\(d=w-u\had2\).
\begin{enumerate}
\item[(a)] A column component distribution matches
\(\mu,\Sigma,m_3,m_4\) if and only if every component has \(\pi_i>0\),
\(\pi_i\E[T_i^2]=1\), \(\pi_i\E[T_i^3]=u_i\), and
\(\pi_i\E[T_i^4]=w_i\).
\item[(b)] Such a distribution exists if and only if
\begin{equation}\label{eq:feasible}
  d_i>0\quad(i=1,\ldots,n),\qquad
  \sum_{i=1}^n\frac1{d_i}\le1.
\end{equation}
Every matching distribution satisfies \(\Pr(X\ne\mu)\ge\sum_i1/d_i\).
\item[(c)] When \eqref{eq:feasible} holds, let
\(a_i=\frac12(u_i+\sqrt{u_i^2+4d_i})\) and \(b_i=a_i-u_i\). The \(2n+1\)
points \(\mu+a_iL_i\) w.p.\ \(b_i/(d_i(a_i+b_i))\), \(\mu-b_iL_i\) w.p.\
\(a_i/(d_i(a_i+b_i))\), and \(\mu\) w.p.\ \(1-\sum_i1/d_i\) form a matching
distribution that attains the lower bound in part (b).
\end{enumerate}
\end{theorem}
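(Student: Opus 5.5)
We prove the three parts in order, reducing everything to moment bookkeeping along the columns and then to Corollary~\ref{cor:unit}.

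For part (a), write a column component distribution as a mixture of components \(X=\mu+T_iL_i\) with probability \(\pi_i\), plus an atom at \(\mu\). We will treat components with \(\pi_i=0\) as absent and note that they would contribute nothing. By the contribution formulas of Section~\ref{sec:setting}, the mixture has mean \(\mu+\sum_i\pi_i\E[T_i]L_i=\mu\). Its covariance is \(\sum_i\pi_i\E[T_i^2]L_iL_i^\top=L\,\mathrm{diag}(v)L^\top\), where \(v_i=\pi_i\E[T_i^2]\). Its third and fourth marginal central moments are \(L\had3\tilde u\) and \(L\had4\tilde w\), with \(\tilde u_i=\pi_i\E[T_i^3]\) and \(\tilde w_i=\pi_i\E[T_i^4]\).

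Matching the two higher moments is immediate. Since \(L\had3\) and \(L\had4\) are nonsingular, \(m_3=L\had3\tilde u\) holds if and only if \(\tilde u=u\), and \(m_4=L\had4\tilde w\) holds if and only if \(\tilde w=w\).

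The covariance step is the main obstacle. We must show that \(L\,\mathrm{diag}(v)L^\top=\Sigma=LL^\top\) forces \(v=\mathbf 1\). Nonsingularity of \(L\had4\) gives \(\det L\neq0\), since otherwise \(\Sigma\) would be singular. Hence \(L\) is invertible, and \(\mathrm{diag}(v)=L^{-1}\Sigma L^{-\top}=I\). So \(v_i=1\), which also forces \(\pi_i>0\). Conversely, these conditions clearly reproduce all four targets.

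For part (b), part (a) reduces existence to the independent existence, for each \(i\), of a mean-zero \(T_i\) and a \(\pi_i>0\) with contributions \((1,u_i,w_i)\), subject to \(\sum_i\pi_i\le1\) so that the atom at \(\mu\) has nonnegative mass. By Corollary~\ref{cor:unit}, component \(i\) exists if and only if \(d_i>0\) and \(\pi_i\ge1/d_i\).

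\emph{Sufficiency.} If \eqref{eq:feasible} holds, choose \(\pi_i=1/d_i\).

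\emph{Necessity.} If a matching distribution exists, then each \(d_i>0\), and \(1\ge\sum_i\pi_i\ge\sum_i1/d_i\).

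\emph{Lower bound.} We have \(\Pr(X\ne\mu)=\sum_i\pi_i\Pr(T_i\ne0)\), because \(L_i\neq0\) and the atom sits at \(\mu\). By \eqref{eq:offmass} with \(c_2=1\), each term is at least \(1/d_i\).

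For part (c), we verify the construction directly, with component \(i\) having total mass \(1/d_i\) and two points at displacements \(a_i\) and \(-b_i\). We need the following facts, each a short computation using \(a_i-b_i=u_i\) and \(a_ib_i=d_i\), the latter following from \(a_i,-b_i\) being the roots of \(x^2-u_ix-d_i\).
\begin{itemize}
\item Both masses are positive, since \(a_i,b_i>0\) when \(d_i>0\).
\item The two masses sum to \(1/d_i\).
\item The mean contribution is \((a_ib_i-b_ia_i)/(d_i(a_i+b_i))=0\).
\item The second-moment contribution is \(a_ib_i(a_i+b_i)/(d_i(a_i+b_i))=1\).
\item The third-moment contribution is \(a_ib_i(a_i-b_i)/d_i=u_i\).
\item The fourth-moment contribution is \(a_ib_i(a_i^2-a_ib_i+b_i^2)/(d_i(a_i+b_i))\cdot(a_i+b_i)=a_i^2-a_ib_i+b_i^2=(a_i-b_i)^2+a_ib_i=u_i^2+d_i=w_i\).
\end{itemize}
By part (a), this is a matching distribution. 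The centre weight \(1-\sum_i1/d_i\) is nonnegative by \eqref{eq:feasible}, and \(\Pr(X\ne\mu)=\sum_i1/d_i\), so the bound in part (b) is attained.
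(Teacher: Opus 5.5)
Your proof is correct and follows the paper's argument essentially step for step. Part (a) uses the same bookkeeping $L\operatorname{diag}(c_2)L^\top$, $L\had3c_3$, $L\had4c_4$; part (b) uses Corollary~\ref{cor:unit} together with the decomposition of $\{X\neq\mu\}$ into the component events $\{T_i\neq0\}$; and part (c) uses the two point law with $a_i-b_i=u_i$ and $a_ib_i=d_i$, which the paper takes from the construction in Lemma~\ref{lem:pearson} where you verify the moments by hand. (One wording fix: $L$ is invertible because $\Sigma=LL^\top$ is positive definite, and the hypothesis on $L\had4$ only makes $L$ square; this is in fact the reason you give after ``since''.)
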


Condition~\eqref{eq:feasible} is a probability budget in which \(d_i\) is the
fourth moment assigned to component \(i\) beyond the amount forced by its
third moment, so the component requires probability \(1/d_i\) and
feasibility requires these minimum probabilities to fit within total mass
one. When \(n=1\), the condition reduces exactly to Pearson's inequality.
Assigning more probability to a component than its minimum only creates
another atom at \(\mu\), so for a fixed factor the construction in part (c)
attains the largest possible centre probability, which only a change of
factor can increase.

We call \(L\) feasible when its third and fourth entrywise powers are
nonsingular and \eqref{eq:feasible} holds.

\begin{remark}
GenUT requires \(m_4\ge L\had4u\had2\) entrywise. Since
\(m_4=L\had4w\), this condition is \(L\had4d\ge0\). It does not imply
\(d>0\) and it does not check the probability budget in
\eqref{eq:feasible}. For example, let \(L\) be the rotation by \(30^\circ\), \(u=0\), and
\(w=d=(-1,40)^\top\), so that \(\Sigma=I\), \(m_3=0\), and
\(m_4=L\had4d=(31/16,359/16)^\top\). Both marginals satisfy Pearson's inequality and the GenUT check passes.
Nevertheless, \(d_1<0\). The first component would require a fourth moment
below its squared third moment, and the point formula contains
\(\sqrt{4w_1-3u_1^2}=\sqrt{-4}\).
\end{remark}

The symmetric pairs in PRD do not encounter the Pearson restriction because
they have zero skewness, whereas the restriction applies to the \(Z\)
component that carries all of \(M_3\). The pairs nevertheless consume part
of \(M_4\), and this consumption increases as their assigned probability
decreases.

Define \(A=M_4/\zeta_4-(M_3/\zeta_3)^2\) and \(c=n\Lambda/\zeta_4\). The
quantity \(A\) is the fourth moment available to the \(Z\) component
beyond its squared third moment before accounting for the pairs, while
\(c\) measures the fourth moment cost of those pairs and contains the entire
dependence on \(L\).

\begin{proposition}\label{prop:prd}
Fix \(Z\) with \(\zeta_3\ne0\) and a factor \(L\) of
\(\Sigma-ZZ^\top\).
\begin{enumerate}
\item[(a)] For fixed \(s\) and \(q\) with \(t<1\), let \(H=\sum_k1/q_k\).
The PRD formulas give real points and nonnegative probabilities if and only
if \(\mathcal D(q)=(1-t)(A-\Lambda H/(2s^2\zeta_4))\ge1\). Whenever this
condition holds, the three points along \(Z\) are the law in
Lemma~\ref{lem:pearson}.
\item[(b)] \(\mathcal D(q)\le(1-t)(A-c/t)\), with equality if and only if
all \(q_k\) are equal.
\item[(c)] A choice of \(q\) producing a valid scenario set exists if and
only if
\begin{equation}\label{eq:prd}
  A>0,\qquad \sqrt A\ge1+\sqrt c.
\end{equation}
Equal values of \(q_k\) are optimal and give total pair probability
\(t^*=\sqrt{c/A}\), and neither the condition nor \(t^*\) depends on
\(s\).
\end{enumerate}
\end{proposition}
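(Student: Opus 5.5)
The plan is to reduce everything to Lemma~\ref{lem:pearson} applied to the single component along \(Z\), after computing exactly what moment contributions the symmetric pairs leave for it.

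\textbf{Part (a).} First record the contributions of the pairs. The pair at level \(k\) along column \(L_i\) places mass \(2q_k\) at displacements \(\pm1/\sqrt{2sq_k}\). Its second moment contribution is therefore \(L_iL_i^\top/s\). Summing over \(k\) gives \(LL^\top=\Sigma-ZZ^\top\). The third moment contribution is zero. The fourth moment contribution is \(2q_k\cdot L_i\had4/(4s^2q_k^2)=L_i\had4/(2s^2q_k)\), so the aggregate over \(i,j,k\) equals \(\Lambda H/(2s^2)\).

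The \(Z\) component, with probability \(\pi=1-t\), must then supply unit variance along \(Z\), all of \(M_3\) through \(c_3\zeta_3=M_3\), and \(c_4\zeta_4=M_4-\Lambda H/(2s^2)\). Since PRD match only aggregate moments, these scalar equations are exactly what is required. With \(c_2=1\), condition \eqref{eq:pearson} reads
\[
c_4-c_3^2\ge 1/(1-t),
\]
which is \(\mathcal D(q)\ge1\). I would then check that the PRD three point formulas coincide with the law of Lemma~\ref{lem:pearson}, namely two nonzero points plus an atom at zero (merged with \(\mu\)). Their reality and the nonnegativity of their weights then correspond precisely to this inequality.

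\textbf{Part (b).} For fixed \(t\), that is fixed \(\sum_k q_k=t/(2n)\), Cauchy--Schwarz gives \(H\ge s^2/\sum_kq_k=2ns^2/t\), with equality iff all \(q_k\) are equal. Substituting yields \(\Lambda H/(2s^2\zeta_4)\ge n\Lambda/(t\zeta_4)=c/t\), which is the claim.

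\textbf{Part (c).} By (b), a valid \(q\) exists iff \(\max_{t\in(0,1)}f(t)\ge1\), where \(f(t)=(1-t)(A-c/t)\). Expanding gives \(f(t)=A+c-At-c/t\). Since \(c>0\) (nondegenerate \(L\)), when \(A\le0\) we have \(f<0\), so \(A>0\) is necessary. For \(A>0\), \(f\) is concave with maximiser \(t^*=\sqrt{c/A}\) and maximum \(A+c-2\sqrt{Ac}=(\sqrt A-\sqrt c)^2\). The condition becomes \(|\sqrt A-\sqrt c|\ge1\). The branch \(\sqrt c\ge\sqrt A+1\) would put \(t^*\ge1\), which is outside the range, so on \((0,1)\) we need \(\sqrt A\ge1+\sqrt c\), which also ensures \(t^*<1\). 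Nothing in this depends on \(s\).

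\textbf{Main obstacle.} The delicate step is (a): verifying that the explicit PRD formulas are exactly the Lemma~\ref{lem:pearson} law, including the real-root and sign conditions, rather than merely that some valid law exists. The edge cases also need care: \(c=0\), and the boundary where \(t^*\) hits \(1\).
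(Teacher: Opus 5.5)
Your route is the paper's: pair contributions, reduction of the $Z$ component to Lemma~\ref{lem:pearson}, the harmonic mean bound on $H$, and maximisation of $(1-t)(A-c/t)$. Part (b) is fine.

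\textbf{Gap in (a).} The part of the claim that concerns the PRD formulas themselves is announced (``I would then check'') rather than proved. As you frame it, it would also not give the ``only if'' direction. Your Lemma~\ref{lem:pearson} argument shows that \emph{some} law along $Z$ exists iff $\mathcal D(q)\ge1$. The statement, however, is about the specific $\alpha,\beta,\omega_0,\omega_1,\omega_2$. Coincidence with the lemma's law only makes sense once $\mathcal D(q)\ge1$. Reality of the points does not encode the inequality either, since $4\phi_2-3\phi_1^2=\phi_1^2+4(\phi_2-\phi_1^2)$ is nonnegative for many $q$ with $\mathcal D(q)<1$.

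The missing ingredient is the computation $\phi_2-\phi_1^2=(1-t)\bigl(A-\Lambda H/(2s^2\zeta_4)\bigr)=\mathcal D(q)$, together with $\alpha\beta=\phi_2-\phi_1^2$, $\alpha-\beta=\phi_1$ and $\alpha+\beta=\sqrt{4\phi_2-3\phi_1^2}\ge0$. With these the argument closes in both directions:
\begin{itemize}
\item If $\mathcal D(q)\ge1$, the points are real, $\alpha,\beta>0$, $\omega_1,\omega_2>0$ and $\omega_0=1-1/\mathcal D(q)\ge0$. The points $\alpha/\sqrt{1-t}$ and $-\beta/\sqrt{1-t}$, with mass $1/\mathcal D(q)$ away from zero, are exactly the lemma's construction.
\item Conversely, finite nonnegative $\omega_1,\omega_2$ force $\alpha,\beta>0$ because $\alpha+\beta\ge0$. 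Then $\omega_0\ge0$ is precisely $\alpha\beta=\mathcal D(q)\ge1$.
\end{itemize}
Alternatively, you could check that the formulas reproduce the required contributions whenever they are defined, and then invoke the necessity half of the lemma. That check also needs the identity above.

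\textbf{Smaller point in (c).} $(\sqrt A-\sqrt c)^2$ is the maximum of $f$ over $(0,1)$ only when $t^*<1$, that is $c<A$. So ``the condition becomes $|\sqrt A-\sqrt c|\ge1$'' is not correct as an intermediate step, even though your final conclusion is right. It is cleaner to observe, as the paper does, that $c\ge A$ gives $A-c/t<0$ and hence $f<0$ on all of $(0,1)$. You may then assume $A>c$, where $t^*\in(0,1)$ and the maximum is $(\sqrt A-\sqrt c)^2$.
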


The proposition describes the allocation of probability between the two
parts of PRD, since symmetric pairs with total probability \(t\) lie at
distances of order \(1/\sqrt t\) and use \(c/t\) units of the available
fourth moment, while the \(Z\) component retains probability \(1-t\). Its
Pearson margin is therefore \((1-t)(A-c/t)\), which is maximised at \(t^*\)
with value \((\sqrt A-\sqrt c)^2\). The optimal probabilities derived by
\citet{cbh2018} therefore follow directly from the component feasibility
principle, and what the proposition adds is the equivalence in part (a) and
the reduction of the entire dependence on the factor to \(\Lambda\).
The PRD three point law is itself optimal because whenever any law along
\(Z\) can meet the assigned moments, the published formula can do so.

\subsection{How the factor changes feasibility}\label{sec:size}

PRD depends on \(L\) only through \(\Lambda=\sum_{i,j}L_{ji}^4\). Column
components instead depend on the full matrices \(L\had3\) and \(L\had4\),
although their column sums \(\Lambda_i=\sum_jL_{ji}^4\), which add up to
\(\Lambda\),
provide useful feasibility bounds and allow the next proposition to
describe the range of \(\Lambda\) over all covariance factors.

\begin{proposition}\label{prop:floor}
Let \(R\) be an \(n\) by \(n\) positive definite matrix and let
\(LL^\top=R\). Then
\[
  \frac1n\sum_jR_{jj}^2
  \le\sum_{i,j}L_{ji}^4
  \le\sum_jR_{jj}^2.
\]
The lower bound holds with equality if and only if
\(|L_{ji}|=\sqrt{R_{jj}/n}\) for every \(j,i\). The upper bound holds with equality if and only if \(R\)
is diagonal and \(L\) is a signed and scaled permutation matrix.
\end{proposition}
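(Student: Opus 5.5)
The argument works row by row. For each row \(j\) of \(L\), the identity \(LL^\top=R\) gives \(\sum_iL_{ji}^2=R_{jj}\). So the row vector \(x_j=(L_{j1}^2,\ldots,L_{jn}^2)\) is a nonnegative vector with coordinate sum \(R_{jj}\), and its contribution to \(\Lambda\) is \(\sum_iL_{ji}^4=\|x_j\|_2^2\). Both bounds then come from comparing \(\|x_j\|_2\) with \(\|x_j\|_1=R_{jj}\) and summing over \(j\).

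For the lower bound, apply Cauchy--Schwarz (equivalently the power mean inequality) to each row:
\[
  R_{jj}^2=\Bigl(\sum_iL_{ji}^2\Bigr)^2\le n\sum_iL_{ji}^4 .
\]
Summing over \(j\) gives \(\frac1n\sum_jR_{jj}^2\le\sum_{i,j}L_{ji}^4\). Equality in the sum requires equality in every row. Row equality holds exactly when all \(L_{ji}^2\) in that row are equal, hence equal to \(R_{jj}/n\). This is the stated condition \(|L_{ji}|=\sqrt{R_{jj}/n}\). The condition is an entrywise characterisation only, so I will not need to show that such an \(L\) exists for a given \(R\).

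For the upper bound, nonnegativity of the entries of \(x_j\) gives
\[
  \sum_iL_{ji}^4\le\Bigl(\sum_iL_{ji}^2\Bigr)^2=R_{jj}^2,
\]
because expanding the square adds the nonnegative cross terms \(2\sum_{i<k}L_{ji}^2L_{jk}^2\). Row equality holds exactly when row \(j\) has a single nonzero entry. Since \(R\) is positive definite, \(L\) is nonsingular, so every row and column of \(L\) is nonzero. The main obstacle is the equality case of the upper bound, and the step I expect to be the crux is the following counting argument. When every row has exactly one nonzero entry, there are \(n\) nonzero entries in total. Every column must contain at least one of them, so each column contains exactly one. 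Hence \(L=PD\) for a permutation \(P\) and a nonsingular diagonal \(D\), that is, a signed and scaled permutation matrix.

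It remains to link this to \(R\) being diagonal. If \(L=PD\), then \(R=LL^\top=PD^2P^\top\), which is diagonal. For the \emph{iff} as the statement reads it, note that \(L\) being a signed scaled permutation already forces \(R\) to be diagonal. Conversely, if \(R\) is diagonal and \(L\) is a signed scaled permutation, every row has one nonzero entry and equality holds. I will phrase the equality case as "equality iff \(L\) is a signed scaled permutation, in which case \(R\) is necessarily diagonal". This matches the statement, since diagonality of \(R\) alone does not suffice: a diagonal \(R\) such as \(I\) with a rotated factor \(L\) gives strict inequality.
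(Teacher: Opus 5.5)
Your proof is correct and follows the paper's own argument essentially step for step. It uses row-wise Cauchy--Schwarz for the lower bound, the expansion of \(R_{jj}^2\) with nonnegative cross terms for the upper bound, and nonsingularity of \(L\) to place the single nonzero entry of each row in a distinct column. Your explicit counting argument, and your observation that a signed scaled permutation factor already forces \(R\) to be diagonal, spell out details the paper compresses into ``nonsingularity then places these entries in distinct columns'' and ``the converse statements follow directly.''
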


\begin{proof}
Every row \(j\) satisfies \(\sum_iL_{ji}^2=R_{jj}\), so the Cauchy Schwarz
inequality gives \(R_{jj}^2=(\sum_iL_{ji}^2)^2\le n\sum_iL_{ji}^4\), with
equality precisely when the squared entries in the row are equal, and
summing over rows proves the lower bound. The identity
\(R_{jj}^2=\sum_iL_{ji}^4+2\sum_{i<k}L_{ji}^2L_{jk}^2\) proves the upper
bound. Equality requires one nonzero entry in every row.
Nonsingularity then places these entries in distinct columns, which gives
the stated form of \(L\) and makes \(R\) diagonal. The converse statements
follow directly.
\end{proof}

The two bounds have a clear geometric meaning because each asset is
concentrated on one column at the upper bound and spread evenly across all
columns at the lower bound, with the two values differing by a factor
\(n\). Symmetric and Cholesky roots of moderately correlated covariance
matrices usually concentrate row mass on a few columns, so that their ratio
to the lower bound grows nearly in proportion to \(n\).

Another common choice is the eigenvector factor \(E\,\Omega^{1/2}\) from
the spectral decomposition \(E\Omega E^\top\) of a covariance matrix, which for two
assets gives the principal directions studied in Section~\ref{sec:two}. On
the synthetic covariance matrices used below, its
median value of \(\Lambda\) falls from \(0.48n\) times the lower bound at
\(n=5\) to \(0.13n\) at \(n=160\). The corresponding ratios for the
symmetric root are \(0.90n\) and \(0.59n\). The same fourth powers also
bound the probability required by column components.

\begin{proposition}\label{prop:massbound}
Let \(L\) be a factor of \(\Sigma\) for which \(L\had3\) and \(L\had4\) are
nonsingular and \(d>0\). Let \(M_4=\sum_jm_{4j}\). Then
\begin{equation}\label{eq:massbound}
  \sum_i\frac1{d_i}
  \ge\frac{\left(\sum_i\sqrt{\Lambda_i}\right)^2}{M_4}
  \ge\frac{\left(\sum_iL_{ii}^2\right)^2}{M_4}.
\end{equation}
A feasible factor must therefore satisfy
\((\sum_i\sqrt{\Lambda_i})^2\le M_4\).
\end{proposition}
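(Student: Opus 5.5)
The plan is to express the aggregate fourth moment \(M_4\) through the column sums \(\Lambda_i\), and then apply the Cauchy--Schwarz inequality to pair each \(\Lambda_i\) with the minimum mass \(1/d_i\) of the corresponding component.

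\textbf{Step 1: express \(M_4\) through the columns.} Since \(m_4=L\had4w\), summing the rows gives
\[
M_4=\sum_j\sum_iL_{ji}^4w_i=\sum_i\Lambda_iw_i .
\]
By definition \(w_i=d_i+u_i^2\ge d_i\), and every \(\Lambda_i\ge0\). Hence
\[
M_4\ge\sum_i\Lambda_id_i .
\]
This inequality is the only place where the third moments enter. Discarding \(\Lambda_iu_i^2\) is harmless because it only weakens the bound in the right direction. We also note that \(M_4>0\), since \(\Sigma\) is positive definite and each \(m_{4j}\ge\sigma_j^4>0\), so dividing by \(M_4\) is legitimate.

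\textbf{Step 2: Cauchy--Schwarz.} Because \(d_i>0\), we can write \(\sqrt{\Lambda_i}=\sqrt{\Lambda_id_i}\cdot(1/\sqrt{d_i})\). The Cauchy--Schwarz inequality then gives
\[
\Bigl(\sum_i\sqrt{\Lambda_i}\Bigr)^2\le\Bigl(\sum_i\Lambda_id_i\Bigr)\Bigl(\sum_i\frac1{d_i}\Bigr)\le M_4\sum_i\frac1{d_i}.
\]
Dividing by \(M_4\) yields the first inequality in \eqref{eq:massbound}.

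\textbf{Step 3: the second inequality.} The column sum \(\Lambda_i=\sum_jL_{ji}^4\) contains the diagonal term \(L_{ii}^4\), so \(\sqrt{\Lambda_i}\ge L_{ii}^2\). Summing over \(i\) and squaring, which preserves the order because both sides are nonnegative, gives the second inequality.

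\textbf{Step 4: the feasibility consequence.} Theorem~\ref{thm:blocks}(b) shows that a feasible factor satisfies \(\sum_i1/d_i\le1\). Combining this with the first inequality gives \((\sum_i\sqrt{\Lambda_i})^2\le M_4\).

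The only real obstacle is spotting the correct weighting in Step 2, namely pairing \(\sqrt{\Lambda_id_i}\) with \(1/\sqrt{d_i}\), together with the reduction \(w_i\ge d_i\) in Step 1. Everything else is bookkeeping.
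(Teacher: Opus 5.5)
Your proof is correct and is essentially the paper's argument: write \(M_4=\sum_i\Lambda_iw_i\), combine \(w_i\ge d_i>0\) with Cauchy--Schwarz, and use \(\Lambda_i\ge L_{ii}^4\) for the second bound. The only difference is the order of steps: the paper applies Cauchy--Schwarz with the weights \(w_i\) and then uses \(1/w_i\le1/d_i\), whereas you first drop \(\Lambda_iu_i^2\) and weight by \(d_i\).
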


\begin{proof}
Summing \(m_4=L\had4w\) over assets gives \(M_4=\sum_iw_i\Lambda_i\).
Since \(0<d_i=w_i-u_i^2\le w_i\), the Cauchy Schwarz inequality gives
\[
  \left(\sum_i\sqrt{\Lambda_i}\right)^2
  =\left(\sum_i\sqrt{w_i\Lambda_i}\frac1{\sqrt{w_i}}\right)^2
  \le M_4\sum_i\frac1{w_i}
  \le M_4\sum_i\frac1{d_i}.
\]
The second bound follows from \(\Lambda_i\ge L_{ii}^4\).
\end{proof}

This result explains the dimension effect for column components by
considering uncorrelated assets with equal variances under the diagonal
factor, for which \(\Lambda_i=\sigma_i^4\) and \eqref{eq:massbound} requires
the pooled kurtosis to satisfy \(\bar K=M_4/\sum_j\sigma_j^4\ge n\), whereas a factor attaining the lower bound of Proposition~\ref{prop:floor}
reduces the same requirement to \(\bar K\ge1\). When some \(d_i\le0\),
infeasibility already follows from Theorem~\ref{thm:blocks}, while otherwise
the diagonal bound in \eqref{eq:massbound} can provide the certificate. For
PRD, the effect can be stated exactly when \(Z=\rho\sigma\).

\begin{corollary}\label{cor:diagZ}
Let \(Z=\rho\sigma\) with \(\rho>0\), and define \(R=\Sigma-ZZ^\top\), the
pooled kurtosis \(\bar K=M_4/\sum_j\sigma_j^4\), and the pooled skewness
\(\bar S=M_3/\sum_j\sigma_j^3\).
\begin{enumerate}
\item[(i)] The matrix \(R\) is positive definite if and only if \(\rho\) is
below \(\rho_{\max}=(\sigma^\top\Sigma^{-1}\sigma)^{-1/2}\).
\item[(ii)] For every factor \(L\) of \(R\), feasibility for some \(q\)
requires
\begin{equation}\label{eq:rhomin}
  \rho^2(\bar K-1)\ge\bar S^2.
\end{equation}
\item[(iii)] Write \(\Lambda=(1+\eta)\sum_jR_{jj}^2/n\) with
\(\eta\ge0\). Feasibility for some \(q\) holds if and only if
\begin{equation}\label{eq:attained}
  A>0,\qquad
  \sqrt A\ge
  1+\sqrt{1+\eta}\frac{1-\rho^2}{\rho^2}.
\end{equation}
When \(\eta=0\), condition \eqref{eq:attained} is equivalent to
\eqref{eq:rhomin}.
\end{enumerate}
\end{corollary}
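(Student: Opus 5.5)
The plan is to substitute \(Z=\rho\sigma\) into the quantities \(A\) and \(c\) of Proposition~\ref{prop:prd}. Once they are written in pooled moments, all three parts reduce to elementary manipulations of \eqref{eq:prd}, and the factor enters only through \(\eta\), controlled by Proposition~\ref{prop:floor}.

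For part (i), I would write \(R=\Sigma-\rho^2\sigma\sigma^\top\). Since \(\Sigma\) is positive definite, congruence by \(\Sigma^{-1/2}\) shows that \(R\) is positive definite if and only if \(I-\rho^2vv^\top\) is, where \(v=\Sigma^{-1/2}\sigma\). The eigenvalues of \(I-\rho^2vv^\top\) are \(1\) and \(1-\rho^2\,\sigma^\top\Sigma^{-1}\sigma\). The matrix is therefore positive definite exactly when \(\rho<\rho_{\max}\), using \(\rho>0\). I will also record that \(R_{jj}=\sigma_j^2(1-\rho^2)>0\) under positive definiteness, so that \(0<\rho<1\). This sign information is needed later, when taking square roots.

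Next I compute the quantities in Proposition~\ref{prop:prd}. With \(\zeta_3=\rho^3\sum_j\sigma_j^3\) and \(\zeta_4=\rho^4\sum_j\sigma_j^4\), we get \(M_3/\zeta_3=\bar S/\rho^3\) and \(M_4/\zeta_4=\bar K/\rho^4\). Hence
\[
A=\frac{\bar K}{\rho^4}-\frac{\bar S^2}{\rho^6}=\frac{\rho^2\bar K-\bar S^2}{\rho^6}.
\]
Using \(\sum_jR_{jj}^2=(1-\rho^2)^2\sum_j\sigma_j^4\) and the parametrisation \(\Lambda=(1+\eta)\sum_jR_{jj}^2/n\), I obtain \(c=n\Lambda/\zeta_4=(1+\eta)(1-\rho^2)^2/\rho^4\). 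Because \(1-\rho^2>0\), this gives \(\sqrt c=\sqrt{1+\eta}\,(1-\rho^2)/\rho^2\). Substituting into Proposition~\ref{prop:prd}(c) yields \eqref{eq:attained} directly, which proves the equivalence in part (iii). Proposition~\ref{prop:floor}, applied to \(R\), guarantees that \(\eta\ge0\).

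For part (ii) I will use monotonicity in \(\eta\). If some \(q\) is feasible, then \eqref{eq:attained} together with \(\eta\ge0\) gives
\[
\sqrt A\ge1+\frac{1-\rho^2}{\rho^2}=\frac1{\rho^2},
\]
hence \(A\ge\rho^{-4}\). Multiplying by \(\rho^6\) gives \(\rho^2\bar K-\bar S^2\ge\rho^2\), which is \eqref{eq:rhomin}. This argument holds for every factor \(L\) of \(R\).

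For the final claim of (iii), set \(\eta=0\). Condition \eqref{eq:attained} then reads \(A>0\) and \(\sqrt A\ge\rho^{-2}\), which is equivalent to \(A\ge\rho^{-4}\); here the requirement \(A>0\) is implied automatically. As above, \(A\ge\rho^{-4}\) is equivalent to \eqref{eq:rhomin}. I do not expect a genuine obstacle. The only delicate points are keeping track of the signs, namely \(\rho\in(0,1)\) and the use of \(\zeta_3\ne0\), which holds once \(\rho>0\) and \(\sum_j\sigma_j^3>0\), and checking that the hypotheses of Propositions~\ref{prop:prd} and~\ref{prop:floor} are met by \(R\).
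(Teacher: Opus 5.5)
Your proposal is correct and follows essentially the paper's argument. It uses the same congruence by \(\Sigma^{1/2}\) for (i), the same expressions \(A=\bar K/\rho^4-\bar S^2/\rho^6\) and \(c=(1+\eta)((1-\rho^2)/\rho^2)^2\) substituted into Proposition~\ref{prop:prd}(c), and Proposition~\ref{prop:floor} to control the factor term. Deriving (ii) from (iii) through \(\eta\ge0\) is only a reordering of the paper's direct bound \(c\ge\sum_jR_{jj}^2/\zeta_4\), which gives \(1+\sqrt c\ge1/\rho^2\).
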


For \(\bar K>1\), parts (i) and (ii) define the factor independent interval
\(\bar S^2/(\bar K-1)\le\rho^2<\rho_{\max}^2\). The lower endpoint prevents a short direction \(Z\) from carrying all the
pooled skewness with insufficient kurtosis, while the upper endpoint ensures
that the remaining covariance is positive definite. Part (iii) measures how
much of this interval a particular factor retains, with smaller values of
\(\eta\) preserving a larger portion.

\begin{remark}\label{rem:floor}
Equality in the lower bound requires signs \(s_{ji}\in\{-1,1\}\) such that
\(R_{jk}/\sqrt{R_{jj}R_{kk}}=n^{-1}\sum_is_{ji}s_{ki}\) for all \(j,k\). A
generic covariance matrix does not have this structure, and for \(R=I\)
the condition requires a scaled Hadamard matrix, which exists only in
selected dimensions. Flat orthogonal matrices and approximate constructions
are studied by \citet{jaming2015}, and we call a rotation that minimises
\(\Lambda\) a minimum quartic factor.
\end{remark}

\subsection{Rotated moment matching}\label{sec:method}

The preceding results add one numerical step to closed form moment matching.
Starting from a fixed factor \(L_0\), we write \(L=L_0Q\) and search over
orthogonal \(Q\) with the gradient projection method of
\citet{jennrich2001}. Once a feasible factor is found, the scenarios follow
directly from Theorem~\ref{thm:blocks} or Proposition~\ref{prop:prd}.

For column components, the objective is the total component probability
\(F(Q)=\sum_i1/d_i(L_0Q)\), which equals one minus the centre weight. Each
term is continued linearly once \(d_i\) falls below \(10^{-3}\), so that the
search can move through infeasible factors, and each search stops as soon as
\(F(Q)\le0.9\) or the iteration stalls. We use the symmetric root and \(J\)
random rotations \(\exp(h(G_j-G_j^\top)/\sqrt{2n})\) as starting points,
where the entries of \(G_j\) are independent standard normal variables and
\(h\) controls the spread around the identity, and we return the factor with
the smallest \(F\), which by Theorem~\ref{thm:blocks} leaves the largest
centre probability among the factors found.

For PRD, Corollary~\ref{cor:diagZ} first determines the factor independent
interval for \(\rho\), whose lower endpoint is well defined because
\(\bar K\ge1\) by Pearson's inequality. We test three interior values and
compute a minimum quartic factor for each residual covariance. Feasibility
is then checked using Proposition~\ref{prop:prd}, with equal pair
probabilities because they are optimal.

\begin{algorithm}[!htbp]
\caption{Rotated moment matching}\label{alg:rmm}
\begin{algorithmic}[1]
\Require targets \(\mu,\Sigma,m_3,m_4\), random starts \(J\), spread
\(h\), PRD levels \(s\)

\Statex \textit{Column component variant}
\State \(L_0\gets\Sigma^{1/2}\) and
\(\mathcal Q_0\gets\{I,Q_1,\ldots,Q_J\}\) with
\(Q_j=\exp(h(G_j-G_j^\top)/\sqrt{2n})\)
\For{\(Q_0\in\mathcal Q_0\)}
  \State minimise the linearly continued \(F(Q)=\sum_i1/d_i(L_0Q)\) from
  \(Q_0\) over orthogonal \(Q\) until \(F(Q)\le0.9\) or the iteration stalls
  \State retain \(Q\) if \(d_i(L_0Q)>0\) for all \(i\) and \(F(Q)\le1\)
\EndFor
\State \textbf{if} no factor is retained \textbf{then return} no feasible
factor found
\State \Return the scenarios of Theorem~\ref{thm:blocks}(c) for \(L=L_0Q\),
with \(Q\) the retained rotation of smallest \(F\)

\Statex \textit{PRD variant}
\State compute \(\rho_{\max}\) and \(\rho_{\min}=|\bar S|/\sqrt{\bar K-1}\),
with \(\rho_{\min}=0\) when \(\bar S=0\), from Corollary~\ref{cor:diagZ}
\State \textbf{if} \(\rho_{\min}\ge\rho_{\max}\) \textbf{then return} no
feasible factor exists for \(Z=\rho\sigma\)
\For{\(r=1,2,3\)}
  \State \(\rho\gets\rho_{\min}+r(\rho_{\max}-\rho_{\min})/4\)
  \State compute a minimum quartic factor \(L\) of
  \(\Sigma-\rho^2\sigma\sigma^\top\)
  \State \textbf{if} \(\sqrt A\ge1+\sqrt c\) \textbf{then return} the PRD
  scenarios with \(q_k=t^*/(2ns)\)
\EndFor
\State \Return no feasible factor found
\end{algorithmic}
\end{algorithm}

The method does not modify the targets, so every returned scenario set
matches the prescribed moments exactly. An empty PRD interval certifies that
no factor and no value of \(\rho\) with \(Z=\rho\sigma\) is feasible. Other
unsuccessful outcomes remain inconclusive because the rotation searches are
local, although Section~\ref{sec:two} shows that feasible factors need not
always exist.

\section{Computational experiments}\label{sec:numerics}

We use the moments of twenty FTSE stocks reported in Tables A.1 to A.3 of
PRD, the moments of the three year sample of 35 Chilean IPSA stocks used by
\citet{cbh2018}, daily prices of IPSA stocks from 2005 to 2015, and 144
synthetic datasets with dimensions from 5 to 160. The IPSA prices give 33
rolling windows of two years of weekly returns on 33 stocks and 28 rolling
windows of three years of daily returns on 31 stocks. The synthetic returns follow a one factor
model with standardised Johnson SU shocks under three combinations of
skewness and tail thickness, with ten datasets for each combination and
dimension up to 40 and four for the dimensions 80 and 160, and their
targets are the moments of 2000 simulated observations.

All rotated factors are produced by Algorithm~\ref{alg:rmm} with \(J=4\)
random starts of spread \(h=0.3\), and the code that reproduces every result
will be made publicly available in the repository
\texttt{rotated-moment-matching}.

Figure~\ref{fig:genut-real} reports the centre weight of GenUT with the
symmetric root on every rolling IPSA window. The weight is negative
throughout, ranging from \(-4.42\) to \(-1.19\) on the weekly windows and
from \(-3.24\) to \(-1.00\) on the daily windows, and it equals \(-0.99\) on
the FTSE moments and \(-1.39\) on the 2010 IPSA sample, although on the FTSE
moments the componentwise check of GenUT is satisfied. A rotated factor
gives valid probabilities on every one of these samples
(Table~\ref{tab:rotation-gain}), so on these data the failure of the
published construction is caused by its choice of factor alone.

\begin{figure}[!htbp]
\centering
\includegraphics[scale=0.8]{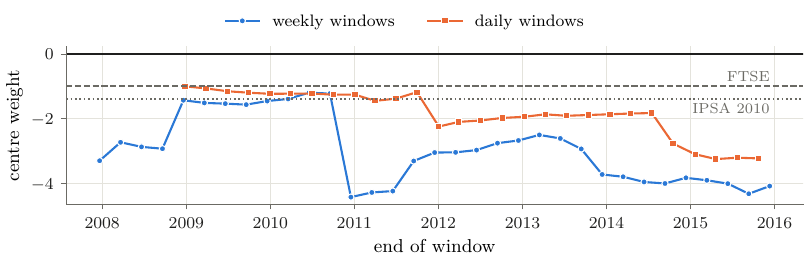}
\caption{Centre weight of GenUT with the symmetric root on rolling IPSA
windows. Horizontal lines mark the FTSE and 2010 IPSA samples.}
\label{fig:genut-real}
\end{figure}

Figure~\ref{fig:prd-real} plots the margin \(\sqrt A-\sqrt c\) of
Proposition~\ref{prop:prd} against \(\rho\) for the three factors, with the
interval of Corollary~\ref{cor:diagZ} shaded and feasibility above the
dashed line at one. The minimum quartic factor is feasible on almost the
whole interval, from \(\rho=0.13\) to \(\rho_{\max}=0.514\) on the FTSE
moments and from \(\rho=0.15\) to \(\rho_{\max}=0.476\) on the IPSA sample,
whereas the Cholesky factor is feasible on the FTSE moments only for
\(\rho\ge0.43\) and the symmetric root only in a narrow band just below
\(\rho_{\max}\), and neither is feasible on the IPSA sample. The values used
in the examples of PRD, marked by dotted lines, are not feasible on their
own FTSE moments, since \(\Sigma-ZZ^\top\) is indefinite at \(\rho=0.7\) and
the symmetric root gives \(\sqrt A-\sqrt c=-0.23<1\) at \(\rho=0.45\).

\begin{figure}[!htbp]
\centering
\includegraphics[scale=0.8]{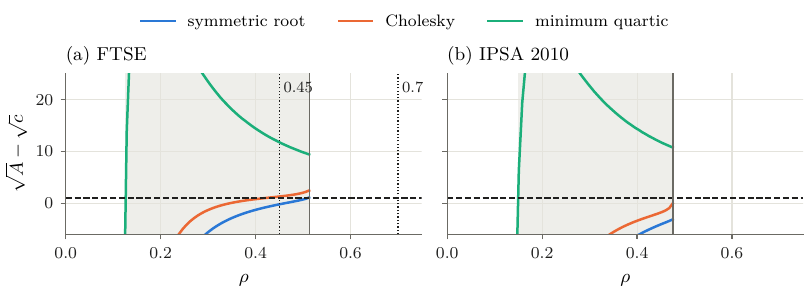}
\caption{PRD margin against \(\rho\) for three factors. The shaded interval
is the window of Corollary~\ref{cor:diagZ}, and feasibility requires values
above one.}
\label{fig:prd-real}
\end{figure}

Figure~\ref{fig:core-scaling} summarises the synthetic study. Panel (a)
shows that \(\Lambda\) for the symmetric and Cholesky roots of the PRD
remainder grows almost in proportion to the upper bound \(n\) of
Proposition~\ref{prop:floor}, while the minimum quartic factor stays at a
median within \(1.07\) times and a maximum within \(1.10\) times the lower
bound at every dimension. Panel (b) shows the
consequence for GenUT, whose symmetric root requires a total component
probability that grows from a median of \(1.1\) with five assets to \(23.4\)
with 160 assets and exceeds one on every dataset from ten assets on. Panel
(c) reports the share of datasets with a valid scenario set when \(\rho\)
is chosen on a fine grid. PRD with the symmetric root is feasible on every
dataset with five assets but on only two of the thirty with ten, the
Cholesky factor is feasible on nine of those thirty, including both of
them and eight heavy tailed datasets, and neither is feasible on any larger
dataset, whereas the minimum quartic factor
makes PRD feasible on all 144 datasets. The rotation
search finds a feasible factor for column components with the four random
starts on 142 datasets, and the remaining two light tailed instances with 160
assets are solved when it is rerun with twelve random starts, one of them
with the smaller spread \(h=0.1\).

\begin{figure}[!htbp]
\centering
\includegraphics[scale=0.8]{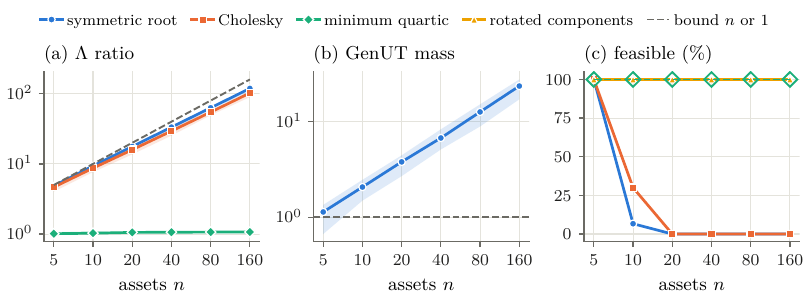}
\caption{Synthetic study. (a) \(\Lambda\) over its lower bound for the PRD
remainder at the middle of the interval of Corollary~\ref{cor:diagZ}, and
(b) total component probability of the GenUT symmetric root, both as medians
with ranges. (c) Share of datasets with a valid scenario set, for PRD under
each factor and for rotated column components.}
\label{fig:core-scaling}
\end{figure}

Table~\ref{tab:rotation-gain} counts, on all 63 real and 144 synthetic
datasets, the scenario sets with valid probabilities under the standard
factor of each generator and under the factor selected by
Algorithm~\ref{alg:rmm}. GenUT with the symmetric root is valid on none of
the real datasets and on only ten synthetic datasets with five assets,
whereas the rotated factor makes it valid on all 207 datasets while keeping
every target moment. PRD with the better of the symmetric and Cholesky
factors, and \(\rho\) chosen on a fine grid, is valid on 65 datasets, namely
the FTSE moments, 25 of the 61 rolling IPSA windows, and 39 synthetic
datasets with at most ten assets, whereas the minimum quartic factor makes
it valid on 206. The single exception is the weekly IPSA window whose
interval in Corollary~\ref{cor:diagZ} is empty, so that no factor can make
PRD feasible there with \(Z=\rho\sigma\). The improvement is largest in high dimension, where the standard
factors fail on every dataset and the rotated factors succeed on all of
them.

\begin{table}[!htbp]
\centering
\small
\input{tab_rotation_gain}
\caption{Datasets with valid probabilities under the standard factor and under the rotated factor of Algorithm~\ref{alg:rmm}.}
\label{tab:rotation-gain}
\end{table}

\FloatBarrier
\section{The case of two assets}\label{sec:two}

The experiments show that a feasible rotation exists on almost every dataset
tested, although the fixed factor conditions do not by themselves establish
when such a factor must exist. With two assets the complete factor class is
indexed by one angle, which makes the existence question tractable and
reveals the distinct roles of skewness and kurtosis.

\subsection{Angular representation}

Positive diagonal scaling leaves \(u\), \(w\), and \(d\) unchanged
(Lemma~\ref{lem:two-scale}). We may
therefore take unit marginal variances, so that \(\Sigma\) has unit
diagonal and off diagonal entry \(r\) with \(|r|<1\), and write
\(\delta=\arccos r\). For \(\tau\in\R\), define
\begin{equation}\label{eq:two-Lt}
  L(\tau)=
  \begin{pmatrix}
  \cos(\tau+\delta)&\sin(\tau+\delta)\\
  \cos\tau&\sin\tau
  \end{pmatrix}.
\end{equation}
Since its rows are unit vectors with inner product \(r\), this matrix is a
factor of \(\Sigma\).

\begin{lemma}\label{lem:two-angle}
Every factor of \(\Sigma\) equals \(L(\tau)\) for some
\(\tau\in[0,\pi)\), up to column order and column signs. These operations do
not affect feasibility. The matrix \(L(\tau)\had3\) is always nonsingular.
The matrix \(L(\tau)\had4\) is singular only when its columns are the
principal directions of \(\Sigma\), at \(\tau_0=(\pi-\delta)/2\) or
\(\tau_0+\pi/2\).
\end{lemma}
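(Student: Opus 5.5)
The plan is to read a factor row by row, since \(LL^\top=\Sigma\) fixes exactly the row norms and the row inner product. Take any factor \(L\) with rows \(\ell_1^\top,\ell_2^\top\). Then \(\|\ell_1\|=\|\ell_2\|=1\) and \(\ell_1^\top\ell_2=r=\cos\delta\). I will write \(\ell_2=(\cos\tau,\sin\tau)\) for some \(\tau\in[0,2\pi)\). Because the angle between the rows is \(\delta\in(0,\pi)\), the first row has angle \(\tau+\delta\) or \(\tau-\delta\).

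\emph{Reduction to \(L(\tau)\) with \(\tau\in[0,\pi)\).}
\begin{itemize}
\item In the first case \(L=L(\tau)\) directly.
\item In the second case I swap the two columns. This replaces every row angle \(\theta\) by \(\pi/2-\theta\), so the rows acquire angles \(\pi/2-\tau+\delta\) and \(\pi/2-\tau\). The result is \(L(\pi/2-\tau)\).
\item Negating both columns sends \(L(\tau)\) to \(L(\tau+\pi)\). This reduces \(\tau\) modulo \(\pi\) into \([0,\pi)\).
\end{itemize}

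\emph{Invariance of feasibility.} A column permutation permutes the columns of \(L\had3\) and \(L\had4\), and hence permutes the entries of \(u\), \(w\) and \(d\). Neither the nonsingularity of these matrices nor condition \eqref{eq:feasible} is affected. Negating column \(i\) negates column \(i\) of \(L\had3\), hence negates \(u_i\), while \(L\had4\) and \(w\) are unchanged. Therefore \(d_i=w_i-u_i^2\) is unchanged, and by Theorem~\ref{thm:blocks} feasibility, including the weights up to relabelling, is preserved.

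\emph{Determinants.} Set \(a=\cos(\tau+\delta)\sin\tau\) and \(b=\sin(\tau+\delta)\cos\tau\). Then
\[
\det L(\tau)\had3=a^3-b^3=(a-b)(a^2+ab+b^2),\qquad
\det L(\tau)\had4=a^4-b^4=(a-b)(a+b)(a^2+b^2).
\]
The key identities are
\[
a-b=\sin(\tau-(\tau+\delta))=-\sin\delta\neq0,
\]
which is nonzero because \(|r|<1\), and
\[
a+b=\sin(2\tau+\delta).
\]
Since \(a\) and \(b\) are not both zero, both \(a^2+ab+b^2\) and \(a^2+b^2\) are strictly positive. The first holds because the quadratic form \(a^2+ab+b^2\) is positive definite. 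Hence \(L\had3\) is always nonsingular. Moreover, \(L\had4\) is singular exactly when \(\sin(2\tau+\delta)=0\). On \([0,\pi)\) this happens only at \(\tau=(\pi-\delta)/2=\tau_0\) and at \(\tau=(2\pi-\delta)/2=\tau_0+\pi/2\).

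\emph{Identifying the singular angles.} Finally I check that at \(\tau_0\) the columns are the principal directions. Substituting gives columns \((-\sin(\delta/2),\sin(\delta/2))^\top\) and \((\cos(\delta/2),\cos(\delta/2))^\top\). These are orthogonal multiples of the eigenvectors \((1,-1)^\top\) and \((1,1)^\top\) of \(\Sigma\). The angle \(\tau_0+\pi/2\) gives the same pair up to order and sign.

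The only delicate step is the bookkeeping in the reduction to \(L(\tau)\), in particular getting the column-swap reflection right in the \(\tau-\delta\) case. The determinant factorisations themselves are routine once \(a\pm b\) are recognised as sines.
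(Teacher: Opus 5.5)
Your proof is correct and follows essentially the paper's route: you parametrise the rows by angles and reduce by column operations, swapping the columns in the \(\tau-\delta\) case where the paper negates the second column (both land on some \(L(\tau')\)); you then track how permutations and signs act on \(u\), \(w\), \(d\), and factor the determinants using \(a-b=-\sin\delta\) and \(a+b=\sin(2\tau+\delta)\), which are exactly the quantities \(z_1-z_2\) and \(z_1+z_2\) used in Proposition~\ref{prop:two-skew-angle}. Your factorisation \(a^4-b^4=(a-b)(a+b)(a^2+b^2)\) is the same as the paper's \((p_1-p_2)(p_1\bar p_2+p_2\bar p_1)\), since \(a^2=p_1\bar p_2\) and \(b^2=\bar p_1p_2\).
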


For two components, condition \eqref{eq:feasible} is equivalent to
\begin{equation}\label{eq:pair-form}
  d_1>1,\qquad d_2>1,\qquad
  (d_1-1)(d_2-1)\ge1.
\end{equation}
It follows in particular that \(d_1+d_2\ge4\).

\subsection{Symmetric marginals}

When the marginal skewness is zero, a rotation reaches the natural
univariate bound.

\begin{theorem}\label{thm:two-sym}
Let \(n=2\), \(|r|<1\), and \(S=0\). A feasible factor exists if and only if
\begin{equation}\label{eq:two-symcond}
  K_1\ge1,\qquad K_2\ge1,\qquad
  (K_1,K_2)\ne(1,1).
\end{equation}
More precisely, the following statements hold whenever
\eqref{eq:two-symcond} is satisfied.
\begin{enumerate}
\item[(a)] If \(K_1<K_2\), then some \(L(\tau)\) is feasible with
\(d_1=K_1/\cos^2(\tau+\delta)\), \(d_2=K_1/\sin^2(\tau+\delta)\), and
\(\sum_i1/d_i=1/K_1\).
\item[(b)] If \(K_1=K_2=K>1\), then for every
\(\kappa\in(0,\min\{1-r^2,K-1\}]\) some \(L(\tau)\) is feasible with
\(d_i\ge K/(1+\kappa)\) and \(\sum_i1/d_i=(1+\kappa)/K\).
\end{enumerate}
\end{theorem}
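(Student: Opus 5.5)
Throughout, \(S=0\) gives \(m_3=0\). By Lemma~\ref{lem:two-angle}, \(L(\tau)\had3\) is nonsingular, so \(u=0\) and \(d=w\), where \(w\) solves
\[
  \cos^4(\tau+\delta)\,w_1+\sin^4(\tau+\delta)\,w_2=K_1,\qquad
  \cos^4\tau\,w_1+\sin^4\tau\,w_2=K_2 .
\]
Feasibility of \(L(\tau)\) is therefore \eqref{eq:feasible} for this \(w\), together with \(\tau\) avoiding the singular angles of Lemma~\ref{lem:two-angle}. I will show necessity first, then construct the factors in (a) and (b).

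\emph{Necessity.} The conditions \(K_j\ge1\) are Pearson's inequality \eqref{eq:pearson-marg}. To exclude \((1,1)\), I use the equality case of \eqref{eq:pearson-marg}. Each marginal must then be \(\pm\sigma_j\) with probability \(\tfrac12\), so there is no mass at \(\mu\), and every scenario lies in \(\mu+\{\pm1\}^2\) (unit variances).

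A column component with a nonzero contribution along \(L_i\) places points \(\mu+tL_i\) with \(t\ne0\). Both coordinates of \(tL_i\) must lie in \(\{\pm1\}\), so \(|L_{1i}|=|L_{2i}|\). This must hold for both columns, so every entry of \(L\had4\) equals \(1/4\). Then \(L\had4\) is singular and \(L\) is not feasible by definition.

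\emph{Part (a).} I substitute the proposed \(d_1=K_1/\cos^2(\tau+\delta)\) and \(d_2=K_1/\sin^2(\tau+\delta)\). The first equation holds identically, and \(\sum_i1/d_i=1/K_1\le1\). The second equation becomes
\[
  g(\tau)=\frac{K_2}{K_1},\qquad
  g(\tau)=\frac{\cos^4\tau}{\cos^2(\tau+\delta)}+\frac{\sin^4\tau}{\sin^2(\tau+\delta)} .
\]
By Cauchy--Schwarz in Engel form, \(g\ge(\cos^2\tau+\sin^2\tau)^2/1=1\). Equality holds when \(\cos^2\tau/\cos^2(\tau+\delta)=\sin^2\tau/\sin^2(\tau+\delta)\), which I expect to be exactly the principal-direction angle \(\tau_0\).

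Now restrict \(\tau\) to an open interval between consecutive zeros of \(\cos(\tau+\delta)\) and \(\sin(\tau+\delta)\) that contains \(\tau_0\). Since \(0<\delta<\pi\), the numerators do not vanish at these poles, so \(g\to\infty\) at both ends. The intermediate value theorem then gives \(\tau\) with \(g(\tau)=K_2/K_1>1\). This \(\tau\) differs from \(\tau_0\) because \(g(\tau_0)=1\), and I will also check that it differs from \(\tau_0+\pi/2\), so \(L(\tau)\had4\) is nonsingular. Both \(d_i>0\), so \(L(\tau)\) is feasible.

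\emph{Part (b).} With \(K_1=K_2\), the construction of (a) would force \(g=1\), that is, the singular principal angle, so I use a symmetric choice instead. I look for \(\tau\) with \(w_1=w_2=:w\). Both equations then read \(w(1-\tfrac12\sin^2 2\theta)=K\), with \(\theta=\tau+\delta\) and \(\theta=\tau\) respectively. They are consistent exactly when \(\sin^22(\tau+\delta)=\sin^22\tau\), which holds, for instance, for \(\tau=\pi/4-\delta/2+k\pi/4\). Equivalently, I reflect \(\tau\) symmetrically about the principal axis and choose the branch that is not singular.

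This gives \(d_i=K/(1+\kappa)\) with \(\kappa=1-\sin^22\tau\), so \(\sum_i1/d_i=2/d=(1+\kappa)/K\). Hence \(\kappa\le K-1\) is required for the budget \(\sum_i1/d_i\le1\).

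\emph{Main obstacle.} For (b), a single symmetric angle fixes one value of \(\kappa\), not every \(\kappa\in(0,\min\{1-r^2,K-1\}]\). I must show that a one-parameter family of nonsingular angles sweeps this whole range. If exact symmetry \(w_1=w_2\) is too rigid, I will instead solve the linear system explicitly, giving
\[
  d_i=\frac{\text{cofactor}}{\det L(\tau)\had4},
\]
and study \(\sum_i1/d_i\) and \(\min_i d_i\) as functions of \(\tau\) near the singular angle. There I expect \(\sum_i1/d_i\to1/K\), corresponding to \(\kappa\to0\). The bound \(1-r^2=\sin^2\delta\) should appear as the value of \(\kappa\) at the far end of the admissible interval. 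Continuity then covers every \(\kappa\) in the stated range, and the bound \(d_i\ge K/(1+\kappa)\) should follow from the same explicit formulas.
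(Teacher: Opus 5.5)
\paragraph{Necessity and part (a).} Your necessity argument and part (a) are sound and essentially match the paper's proof. Write \(p_1=\cos^2(\tau+\delta)\) and \(p_2=\cos^2\tau\). Your Engel-form bound is then the paper's identity \(g=1+(p_1-p_2)^2/(p_1(1-p_1))\), and your pole at \(\tau=\pi-\delta\) is the paper's endpoint \(\tau_A\). Two small repairs are needed. In the \((1,1)\) case, what you actually obtain is that the two rows of \(L\had4\) coincide, which is enough for singularity; the entries are \((1+r)^2/4\) and \((1-r)^2/4\), not all \(1/4\). Also, the ``if'' direction needs the case \(K_1>K_2\), which follows by relabelling the assets.

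\paragraph{The gap in part (b).} Part (b) is only a plan, and this leaves the theorem unproved on part of its range. Among nonsingular angles, the ansatz \(w_1=w_2\) holds for \(r\ne0\) only where \(x=\cos(2\tau+\delta)=0\). That angle is the symmetric root \(L(\tau_*)\) with \(\tau_*=\pi/4-\delta/2\), and it gives the single value \(\kappa=1-r^2\). There \(d_i=2K/(1+\kappa)\), not \(K/(1+\kappa)\), although your sum is right. Since \(\sum_i1/d_i=(2-r^2)/K\) at this angle, you exhibit no feasible factor at all when \(1<K<2-r^2\). So even the existence half of the theorem is unproved in that range.

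Your fallback rests on a fact you do not establish: \(d(\tau)\) must have a finite limit at the principal directions even though \(\det L(\tau)\had4\to0\). This fails when \(K_1\ne K_2\), because the numerator of \(w_1\) then tends to \((1-p_1)^2(K_1-K_2)\ne0\). It holds here only because the factor \(p_1-p_2\) cancels in Cramer's rule.

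\paragraph{How to close it.} With \(K_1=K_2=K\), Cramer's rule gives \(w_1=2K(1-rx)/(2-r^2-x^2)\) and \(w_2=2K(1+rx)/(2-r^2-x^2)\), both positive. Hence
\[
  \sum_i\frac1{d_i}=\frac{2-r^2-x^2}{K(1-r^2x^2)},\qquad
  \kappa=\frac{(1-r^2)(1-x^2)}{1-r^2x^2}.
\]
This \(\kappa\) decreases continuously in \(x^2\), from \(1-r^2\) at \(x=0\) to \(0\) as \(x^2\to1\). So every \(\kappa\in(0,1-r^2]\) is attained at some \(x^2<1\), that is, at a nonsingular angle. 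The condition \(\kappa\le K-1\) then gives the budget. The bound \(d_i\ge K/(1+\kappa)\) is immediate from \(1/d_i\le\sum_j1/d_j\).

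\paragraph{Comparison with the paper.} This completed route would be valid and more explicit than the paper's, since it gives \(\kappa\) in closed form along the angle. The paper avoids Cramer's rule instead. It posits \(w_1=\nu/e\) and \(w_2=\nu/(1-e)\), with \(e=\frac12(p_1+p_2)\) and \(\nu=K/(1+\kappa)\). It checks both rows with the same identity whenever \((p_1-p_2)^2=\kappa(p_1+p_2)(2-p_1-p_2)\). It then finds such an angle by the intermediate value theorem between \(\tau_0\) and \(\tau_*\).
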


Pearson's inequality reduces to \(K_j\ge1\) in this setting, and Theorem
\ref{thm:two-sym} shows that rotation reaches this bound. The exceptional
case \(K=(1,1)\) remains attainable but requires a factor for which
\(L\had4\) is singular (Remark~\ref{rem:two-11}), while in part (a) the two
components together need the same probability as a single marginal with the
smaller kurtosis and in part (b) their total probability can be made
arbitrarily close to \(1/K\). The symmetric root, by contrast, can require
substantially more kurtosis.

\begin{proposition}\label{prop:two-symroot}
Let \(n=2\), \(|r|<1\), and \(K_1=K_2=K\). If \(S=0\), the symmetric root
is feasible if and only if \(K\ge2-r^2\). For arbitrary \(S\), it is
feasible whenever
\(2K/(2-r^2)-\|S\|_\infty^2/((1-|r|)(1+|r|/2)^2)\ge2\).
\end{proposition}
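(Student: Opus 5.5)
The plan is to compute the symmetric root explicitly and read off \(u\), \(w\), and \(d\) in closed form. The exceptional case \(K=(1,1)\) and the other factors play no role here; everything follows from Theorem~\ref{thm:blocks}. With unit variances, \(\Sigma^{1/2}=\begin{pmatrix}\alpha&\beta\\ \beta&\alpha\end{pmatrix}\), where
\(\alpha=\tfrac12(\sqrt{1+r}+\sqrt{1-r})\) and \(\beta=\tfrac12(\sqrt{1+r}-\sqrt{1-r})\). Thus \(\alpha^2+\beta^2=1\), \(2\alpha\beta=r\), \(\alpha>|\beta|\), and \(\alpha^2-\beta^2=\sqrt{1-r^2}>0\). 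Lemma~\ref{lem:two-scale} justifies the normalisation, since scaling leaves \(d\) unchanged. Both \(L\had3\) and \(L\had4\) are symmetric circulant \(2\times2\) matrices, with determinants \(\alpha^6-\beta^6\) and \(\alpha^8-\beta^8\). These are positive because \(\alpha>|\beta|\), so the theorem applies.

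For the kurtosis part, the equal targets \(m_4=(K,K)^\top\) form an eigenvector of \(L\had4\) with eigenvalue \(\alpha^4+\beta^4\). Hence \(w_1=w_2=K/(\alpha^4+\beta^4)\). Next I would use
\(\alpha^4+\beta^4=(\alpha^2+\beta^2)^2-2\alpha^2\beta^2=1-r^2/2\),
which gives \(w_i=2K/(2-r^2)\).

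When \(S=0\) we have \(u=0\) and \(d=w\). Condition \eqref{eq:feasible} then reads \(2(2-r^2)/(2K)\le1\), that is, \(K\ge2-r^2\). The other requirement \(d_i>0\) is automatic, which proves the first claim in both directions.

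For arbitrary \(S\), I would bound \(u\) through
\[
(L\had3)^{-1}=\frac{1}{\alpha^6-\beta^6}\begin{pmatrix}\alpha^3&-\beta^3\\-\beta^3&\alpha^3\end{pmatrix}.
\]
Its \(\infty\)-norm is \((\alpha^3+|\beta|^3)/(\alpha^6-\beta^6)=1/(\alpha^3-|\beta|^3)\). It follows that \(\|u\|_\infty\le\|S\|_\infty/(\alpha^3-|\beta|^3)\), and so
\[
d_i\ge \frac{2K}{2-r^2}-\frac{\|S\|_\infty^2}{(\alpha^3-|\beta|^3)^2}.
\]
If the right side is at least \(2\), then \(d_i\ge2>0\) and \(\sum_i1/d_i\le1\), so \eqref{eq:feasible} holds.

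It remains to identify the denominator, which I expect to be the only step needing care. Write \(b=|\beta|\). Then \(\alpha^3-b^3=(\alpha-b)(\alpha^2+\alpha b+b^2)=(\alpha-b)(1+|r|/2)\), because \(\alpha b=|r|/2\). Moreover \((\alpha-b)^2=1-2\alpha b=1-|r|\). Therefore \((\alpha^3-|\beta|^3)^2=(1-|r|)(1+|r|/2)^2\), which is exactly the stated sufficient condition.
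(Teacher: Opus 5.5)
Your proposal is correct and is essentially the paper's own proof. It uses the explicit symmetric root (your \(\alpha,\beta\) are the paper's \(\chi_1,\chi_2\)), takes \(\mathbf 1\) as an eigenvector of \(L\had4\) to get \(w_i=2K/(2-r^2)\), and bounds \(|u_i|\le\|S\|_\infty/(\alpha^3-|\beta|^3)\) by row sums. Your factorisation \(\alpha^3-|\beta|^3=(\alpha-|\beta|)(1+|r|/2)\) with \((\alpha-|\beta|)^2=1-|r|\) spells out the step the paper leaves as a direct computation.

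One minor point: Lemma~\ref{lem:two-scale} does not by itself justify the unit-variance normalisation for this particular factor. The reason is that \(W\Sigma^{1/2}\) is in general not the symmetric root of \(W\Sigma W\). Unit variances should therefore be taken as the standing assumption of Section~\ref{sec:two}, not derived from the lemma.
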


At \(r=0\), the symmetric root is the identity, so each column carries one
asset and the minimum component probabilities add. For \(n\) uncorrelated
assets, this factor therefore requires \(\sum_j1/(K_j-S_j^2)\le1\). With symmetric marginals, the harmonic mean of the kurtoses must be at least
\(n\). With equal marginals the requirement reads \(K\ge n+S^2\), which is the
threshold that the higher order unscented estimator imposes on normalised
kurtosis when \(\delta_H=0\) \citep{house2021,yang2024}.
Proposition~\ref{prop:massbound} traces this requirement to the
concentration of the identity factor, since a factor at the lower bound of
Proposition~\ref{prop:floor} relaxes the corresponding necessary condition
to \(\bar K\ge1\).

\subsection{The skewness tradeoff}

With skewed marginals, rotation need not attain Pearson's bound, even when
the target itself is attainable.

\begin{proposition}\label{prop:two-fail}
Let \(n=2\), \(r=0\), \(S=(1,1)\), and
\(K=(5/2,5/2)\). No factor of \(I\) is feasible. The target is nevertheless
attainable, since two independent copies of the distribution with
\(Y=(1+\sqrt7)/2\) w.p.\ \((\sqrt7-1)/(3\sqrt7)\), \(Y=-(\sqrt7-1)/2\) w.p.\
\((\sqrt7+1)/(3\sqrt7)\), and \(Y=0\) w.p.\ \(1/3\) have mean zero, covariance \(I\), skewness \((1,1)\), and kurtosis
\((5/2,5/2)\).
\end{proposition}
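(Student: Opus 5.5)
By Lemma~\ref{lem:two-angle}, it suffices to examine the one-parameter family \(L(\tau)\), since column order and column signs do not affect feasibility. With \(r=0\) we have \(\delta=\pi/2\), and writing \(c=\cos\tau\), \(s=\sin\tau\) gives
\[
L(\tau)=\begin{pmatrix}-s&c\\ c&s\end{pmatrix},\qquad
L\had3=\begin{pmatrix}-s^3&c^3\\ c^3&s^3\end{pmatrix},\qquad
L\had4=\begin{pmatrix}s^4&c^4\\ c^4&s^4\end{pmatrix}.
\]
The only angles with singular \(L\had4\) satisfy \(s^4=c^4\). These are the principal-direction angles of Lemma~\ref{lem:two-angle}, and such factors are not feasible by definition. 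For every other \(\tau\), I will solve the linear systems of Theorem~\ref{thm:blocks} explicitly. With \(m_4=(5/2,5/2)^\top\), symmetry gives \(w_1=w_2=\tfrac{5}{2}/(s^4+c^4)\). With \(m_3=(1,1)^\top\), Cramer's rule gives
\[
u_1=\frac{c^3-s^3}{s^6+c^6},\qquad u_2=\frac{c^3+s^3}{s^6+c^6},
\]
so that \(u_1^2+u_2^2=2/(s^6+c^6)\).

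The key step is to avoid handling \(d_1\) and \(d_2\) separately. Instead I will use the consequence of \eqref{eq:pair-form} that every feasible factor has \(d_1+d_2\ge4\). Setting \(p=s^2c^2\in[0,1/4)\), we have \(s^4+c^4=1-2p\) and \(s^6+c^6=1-3p\). Hence
\[
d_1+d_2=f(p)=\frac{5}{1-2p}-\frac{2}{1-3p},
\]
and the task reduces to showing \(f(p)<4\) on \([0,1/4]\). At the endpoints, \(f(0)=3\) and \(f(1/4)=2\). The only critical point satisfies \((1-3p)/(1-2p)=\sqrt{3/5}\), which gives \(p\approx0.155\) and \(f\approx3.51\). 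A cleaner route I would try first is to clear denominators: \(f(p)<4\) becomes a quadratic inequality in \(p\) with positive denominators, and I expect it to have negative discriminant or to be verifiable directly on the interval. This elementary bound is the main, though modest, obstacle, and it is where care is needed so that the argument stays rigorous rather than numerical. Once it holds, no \(\tau\) satisfies \(d_1+d_2\ge4\), so no factor of \(I\) is feasible.

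For attainability, I will check the stated three-point law directly. It is the construction of Lemma~\ref{lem:pearson} with \(\pi=2/3\), \(c_2=1\), \(c_3=1\), and \(c_4=5/2\), for which \(c_2c_4-c_3^2=3/2=1/\pi\). The two nonzero points \(a=(1+\sqrt7)/2\) and \(-b=-(\sqrt7-1)/2\) carry probabilities proportional to \(b\) and \(a\), so the mean is zero, and their total mass is \(2/3\). I will then compute \(\E[Y^2]=1\), \(\E[Y^3]=1\), and \(\E[Y^4]=5/2\). These reduce to the identities \(ab=3/2\) and \(a-b=1\), using \(\E[Y^k]=\tfrac{2}{3}\,(a^{k-1}b+(-b)^{k-1}a)/(a+b)\). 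Finally, independence of the two copies gives covariance \(I\) and marginal moments \(S=(1,1)\) and \(K=(5/2,5/2)\), so the target is attainable even though no factor of \(I\) is feasible.
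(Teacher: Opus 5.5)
Your proof is correct and follows the paper's argument. The paper also shows $d_1+d_2=5/(1-2y)-2/(1-3y)<4$ by clearing denominators, which gives exactly $24p^2-9p+1>0$ with discriminant $-15<0$. It obtains the attainability law from the one-dimensional construction of Theorem~\ref{thm:blocks} with $u=1$, $w=5/2$, $d=3/2$. One slip to fix: the moment formula should read $\E[Y^k]=\tfrac23\,(a^kb+(-b)^ka)/(a+b)$, since your version with exponent $k-1$ gives $\E[Y]=2/3$ and $\E[Y^2]=0$.
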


Because each marginal has positive slack above Pearson's bound, the
obstruction is created by the column structure rather than by unattainable
marginal moments.

For \(r=0\), \(L(\tau)\) is an ordinary rotation, and we define
\(y=\cos^2\tau\sin^2\tau\in[0,1/4]\). The value \(y=0\) corresponds to the coordinate directions, whereas
\(y=1/4\) corresponds to a rotation of \(45^\circ\), and when both marginal
kurtoses equal \(K\), Proposition~\ref{prop:two-skew-rzero} in the appendix
gives
\begin{equation}\label{eq:two-tradeoff}
  d_1+d_2
  =\frac{2K}{1-2y}
  -\frac{\|S\|_2^2}{1-3y}.
\end{equation}
Moving toward \(45^\circ\) doubles the fourth moment allocated to each
component while multiplying the squared skewness allocation by four, so the
first effect improves feasibility and the second reduces it.

For the target in Proposition~\ref{prop:two-fail}, where \(\|S\|_2^2=2\)
and \(K=5/2\), the second effect dominates at every angle, since
\eqref{eq:two-tradeoff} remains below four for all values of \(y\) and the
pair condition \eqref{eq:pair-form} therefore fails.

\subsection{A sufficient condition for arbitrary correlation}

For \(r\ne0\), the principal directions replace the \(45^\circ\) rotation.
At \(\tau_0\), the columns are \(L_1=\sqrt{(1-r)/2}\,(-1,1)^\top\) and
\(L_2=\sqrt{(1+r)/2}\,(1,1)^\top\). Although \(L(\tau_0)\had4\) is singular, the vector \(d(\tau)\) has a finite
limit whenever both marginal kurtoses are equal.

\begin{theorem}\label{thm:two-skew-pca}
Let \(n=2\), \(|r|<1\), \(S\in\R^2\), and \(K_1=K_2=K\). Define \(\gamma_-=(S_1-S_2)/(1-r)\) and \(\gamma_+=(S_1+S_2)/(1+r)\). If
\(K>\max\{\gamma_-^2,\gamma_+^2\}\) and
\begin{equation}\label{eq:two-skew-pca}
  \frac{1-r}{2(K-\gamma_-^2)}
  +\frac{1+r}{2(K-\gamma_+^2)}
  <1,
\end{equation}
then \(L(\tau)\) is feasible for every \(\tau\ne\tau_0\) sufficiently close
to \(\tau_0\).
\end{theorem}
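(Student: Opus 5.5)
The plan is to compute the limit of \(d(\tau)\) as \(\tau\to\tau_0\) explicitly and show that it equals the vector whose reciprocals sum to the left side of \eqref{eq:two-skew-pca}. Continuity then gives feasibility nearby. Write \(a^2=(1-r)/2\) and \(b^2=(1+r)/2\), so that \(L_1(\tau_0)=a(-1,1)^\top\) and \(L_2(\tau_0)=b(1,1)^\top\).

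\emph{Third-moment part.} By Lemma~\ref{lem:two-angle}, \(L(\tau)\had3\) is nonsingular for every \(\tau\), including \(\tau_0\). Hence \(u(\tau)=(L(\tau)\had3)^{-1}S\) is continuous at \(\tau_0\). Solving the \(2\times2\) system at \(\tau_0\) gives
\[
u_1(\tau_0)=\frac{S_2-S_1}{2a^3},\qquad u_2(\tau_0)=\frac{S_1+S_2}{2b^3}.
\]
Using \(a^2=(1-r)/2\) and \(b^2=(1+r)/2\), this gives \(u_1^2=\gamma_-^2/a^2\) and \(u_2^2=\gamma_+^2/b^2\).

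\emph{Fourth-moment part: the main obstacle.} Here \(L(\tau_0)\had4=\begin{pmatrix}a^4&b^4\\a^4&b^4\end{pmatrix}\) is singular, so \(w(\tau)\) must be obtained as a limit. Because \(K_1=K_2=K\), the target \(m_4=(K,K)^\top\) lies in the range of the limit matrix, so the system is consistent at \(\tau_0\). However, the solution at \(\tau_0\) is only a line \(a^4w_1+b^4w_2=K\), and the limit must select one point on it.

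I would solve by Cramer's rule for \(\tau=\tau_0+\varepsilon\). Both the determinant \(\det L(\tau)\had4\) and the two Cramer numerators vanish at \(\varepsilon=0\), because the rows coincide there and the right-hand side has equal entries. I would check that the determinant has a simple zero, with nonzero first derivative. Then L'Hôpital's rule (a first-order Taylor expansion of the entries \(\cos^4(\tau+\delta)\), \(\sin^4(\tau+\delta)\), \(\cos^4\tau\), \(\sin^4\tau\) around \(\tau_0\), where \(\tau_0+\delta=(\pi+\delta)/2\)) yields a finite limit.

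The guess to verify is
\[
w_1(\tau_0^\pm)=\frac{K}{a^2},\qquad w_2(\tau_0^\pm)=\frac{K}{b^2}.
\]
This guess lies on the line, since \(a^4\cdot K/a^2+b^4\cdot K/b^2=K(a^2+b^2)=K\). It is also the natural "each column carries kurtosis in proportion to its variance" allocation. The derivative computation should confirm it and show that the limit is the same from both sides.

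\emph{Conclusion.} With these limits,
\[
d_1\to\frac{K-\gamma_-^2}{a^2}=\frac{2(K-\gamma_-^2)}{1-r},\qquad
d_2\to\frac{2(K-\gamma_+^2)}{1+r}.
\]
Both limits are positive by the hypothesis \(K>\max\{\gamma_-^2,\gamma_+^2\}\), and \(1/d_1+1/d_2\) tends to the left side of \eqref{eq:two-skew-pca}, which is strictly below one. By continuity of \(d(\tau)\) on a punctured neighbourhood of \(\tau_0\), for all \(\tau\ne\tau_0\) close enough to \(\tau_0\) we have \(d_i(\tau)>0\) and \(\sum_i1/d_i(\tau)<1\).

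On that neighbourhood, \(L(\tau)\had3\) is nonsingular by Lemma~\ref{lem:two-angle}. \(L(\tau)\had4\) is also nonsingular there, because its only singular points in \([0,\pi)\) are \(\tau_0\) and \(\tau_0+\pi/2\), so \(\tau_0\) is isolated. Hence \(L(\tau)\) is feasible by Theorem~\ref{thm:blocks}(b).
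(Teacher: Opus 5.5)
Your argument is the paper's: \(u\) is continuous at \(\tau_0\) because \(L(\tau)\had3\) is always nonsingular, \(w\) has a finite limit there, and the strict inequalities for \(d^0=\bigl(2(K-\gamma_-^2)/(1-r),\,2(K-\gamma_+^2)/(1+r)\bigr)\) persist on a punctured neighbourhood. The limit of \(w\), which you guessed correctly but did not compute, follows without L'H\^opital. In the paper's notation \(p_1=\cos^2(\tau+\delta)\), \(p_2=\cos^2\tau\), \(\bar p_i=1-p_i\), one has \(p_1\bar p_2-\bar p_1p_2=\bar p_2-\bar p_1=p_1-p_2\), so this factor cancels from both Cramer numerators and \(D_4\). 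This leaves \(w_1=K(\bar p_1+\bar p_2)/(p_1\bar p_2+p_2\bar p_1)\) and \(w_2=K(p_1+p_2)/(p_1\bar p_2+p_2\bar p_1)\), which is exactly \eqref{eq:two-skew-weq} and is continuous at \(\tau_0\) with values \(K/a^2\) and \(K/b^2\).
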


The two terms in \eqref{eq:two-skew-pca} are the limiting component
probabilities along the principal directions. With \(S=0\), their sum is
\(1/K\), which agrees with Theorem~\ref{thm:two-sym}.

\subsection{Numerical illustration}

Figure~\ref{fig:two-asset} plots the centre weight \(1-\sum_i1/d_i\) of the
column construction against the angle \(\tau\) for three two asset targets.
With symmetric marginals in panel (a), the weight is positive near the
principal directions and approaches \(1-1/K=1/3\) there, as
Theorem~\ref{thm:two-sym} predicts, while the symmetric root gives \(-0.17\)
in agreement with the threshold \(K\ge2-r^2=1.75\) of
Proposition~\ref{prop:two-symroot}. For the target of
Proposition~\ref{prop:two-fail} in panel (b), the weight remains negative at
every angle, with a best value of \(-0.17\). Panel (c) uses the target of
Figure~\ref{fig:geometry}, which satisfies condition~\eqref{eq:two-skew-pca},
and the weight is positive in a neighbourhood of the principal directions,
as Theorem~\ref{thm:two-skew-pca} asserts. In panels (a) and (c), whose two
marginals are identical, the symmetric root coincides with the least
favourable rotation.

\begin{figure}[!htbp]
\centering
\includegraphics[scale=0.8]{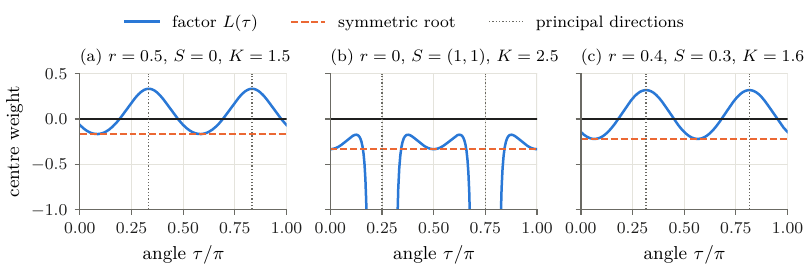}
\caption{Centre weight of the column construction against the rotation
angle for three targets. Dotted lines mark the principal directions.}
\label{fig:two-asset}
\end{figure}

\FloatBarrier
\section{Conclusion}\label{sec:conclusion}

This paper shows that the covariance factor is a design choice rather than a
fixed input in closed form moment matching. By rotating this factor, a
generator can preserve the target moments and the number of scenarios while
turning otherwise invalid weights into a probability distribution.

Pearson's inequality with component mass provides exact feasibility
conditions for GenUT and PRD, while the entrywise fourth powers of the factor
explain the effect of dimension and the advantage of diffuse rotations. The
two asset analysis also clarifies how rotation balances skewness and
kurtosis. The numerical illustrations confirm these mechanisms and produce a
feasible rotation on every dataset considered, including problems with up to
160 assets, except one on which no factor can make PRD feasible.

These results support a simple practical approach in which feasibility is
checked for the selected factor and rotation is considered before the target
moments are modified.

\section*{Funding}

This work was supported by the Agencia Nacional de Investigaci\'on y
Desarrollo (ANID), Chile, through FONDECYT Postdoctoral grant 3240505.







\appendix

\section{Proofs}\label{app:proofs}

\subsection{The component feasibility principle}\label{app:pearson}

\begin{proof}[Proof of Lemma~\ref{lem:pearson}]
Write \(v=\E[T^2]=c_2/\pi\), \(\mu_3=\E[T^3]=c_3/\pi\), and
\(\mu_4=\E[T^4]=c_4/\pi\), so that condition~\eqref{eq:pearson} reads
\(v\mu_4-\mu_3^2-v^3\ge0\). The moment matrix of \((1,T,T^2)\) has rows \((1,0,v)\), \((0,v,\mu_3)\),
and \((v,\mu_3,\mu_4)\). It is positive semidefinite and its determinant is
\(v\mu_4-\mu_3^2-v^3\), which proves necessity. Equality holds precisely
when \(1,T,T^2\) are linearly dependent in \(L^2\), which means that \(T\) is
supported on the roots of a nonzero polynomial of degree at most two. Since
\(v>0\) excludes a constant \(T\), the polynomial has degree two and \(T\)
has exactly two support points, while conversely a two point law satisfies
\((T-a)(T+b)=0\) and therefore gives a singular moment matrix.

For the construction, let \(\vartheta=\mu_3/v\),
\(a=\frac12(\vartheta+\sqrt{\vartheta^2+4v})\), and \(b=a-\vartheta\). Both
\(a\) and \(b\) are positive, and the law w.p.\ \(b/(a+b)\) at
\(a\) and \(a/(a+b)\) at \(-b\) has mean zero, \(\E[T^2]=ab=v\),
\(\E[T^3]=ab(a-b)=\mu_3\), and \(\E[T^4]=ab((a-b)^2+ab)=\mu_3^2/v+v^2\), so it
attains equality in the condition. In the general case,
define \(\lambda=v^3/(v\mu_4-\mu_3^2)\in(0,1]\) and use this two point law
with second and third moments \(v/\lambda\) and \(\mu_3/\lambda\) on an
event of probability \(\lambda\), placing the remaining probability at zero.
The second and third moments remain \(v\) and \(\mu_3\), while the fourth
moment becomes
\(\lambda(\mu_3^2/(\lambda v)+v^2/\lambda^2)=(\mu_3^2+v^3/\lambda)/v=\mu_4\).
Its mass away from zero is
\(\pi\lambda=\pi v^3/(v\mu_4-\mu_3^2)=c_2^3/(c_2c_4-c_3^2)\), which is
equality in \eqref{eq:offmass}. For the bound itself, note that
\(\Pr(T\ne0)>0\) because \(c_2>0\), and that conditioning on \(T\ne0\)
preserves the contributions \(c_k\) while replacing \(\pi\) by
\(\pi'=\pi\Pr(T\ne0)\). The necessity part applied to the conditional law
gives \(c_2c_4-c_3^2\ge c_2^3/\pi'>0\), which is \eqref{eq:offmass}.
\end{proof}

\begin{proof}[Proof of Corollary~\ref{cor:unit}]
With \(c_2=1\), condition~\eqref{eq:pearson} reads \(w-u^2\ge1/\pi\), which
forces \(w-u^2>0\), and \eqref{eq:offmass} gives the minimum mass together
with the law that attains it.
\end{proof}

\subsection{Column components}\label{app:blocks}

\begin{proof}[Proof of Theorem~\ref{thm:blocks}]
Let \(c_{k,i}=\pi_i\E[T_i^k]\) for \(k=2,3,4\), with \(c_{k,i}=0\)
when \(\pi_i=0\), and let \(c_k\) be the vector with entries \(c_{k,i}\).
Every displacement has mean zero, so \(\E[X]=\mu\), and the central moments
are
\begin{gather*}
  \E[(X-\mu)(X-\mu)^\top]=L\operatorname{diag}(c_2)L^\top,\\
  \E[(X_j-\mu_j)^3]=(L\had3c_3)_j,\qquad
  \E[(X_j-\mu_j)^4]=(L\had4c_4)_j.
\end{gather*}
The covariance equals \(LL^\top\) if and only if \(c_2=\mathbf1\), which
also gives \(\pi_i>0\) for every \(i\), while nonsingularity of \(L\had3\)
and \(L\had4\) gives \(c_3=u\) and \(c_4=w\). This proves part (a).

For part (b), let \(C\) denote the index of the component from which \(X\)
is drawn. Component \(i\) has contributions \((1,u_i,w_i)\), so
Corollary~\ref{cor:unit} gives \(d_i>0\) and \(\Pr(C=i,T_i\ne0)\ge1/d_i\).
These events are disjoint and their union is \(\{X\ne\mu\}\), hence
\(\sum_i1/d_i\le\Pr(X\ne\mu)\le1\), which proves necessity together with
the probability bound.

For sufficiency and part (c), set \(\pi_i=1/d_i\) and
\(\pi_0=1-\sum_i1/d_i\), and let \(T_i\) follow the two point law of
Lemma~\ref{lem:pearson} with conditional variance \(d_i\) and conditional
third moment \(u_id_i\), so that \(\vartheta=u_i\), \(a_i-b_i=u_i\), and
\(a_ib_i=d_i\), which are the formulas of part (c). The contributions of
component \(i\) are \(\pi_i\E[T_i^2]=1\), \(\pi_i\E[T_i^3]=u_i\), and
\(\pi_i\E[T_i^4]=u_i^2+d_i=w_i\), so part (a) yields every target moment
while assigning probability \(\sum_i1/d_i\) away from \(\mu\).
\end{proof}

\subsection{PRD}\label{app:prd}

The PRD component along \(Z\) places \(T\) on
\(\{0,\alpha/\sqrt\pi,-\beta/\sqrt\pi\}\) with conditional probabilities
\(\omega_0=1-1/(\alpha\beta)\), \(\omega_1=1/(\alpha(\alpha+\beta))\), and
\(\omega_2=1/(\beta(\alpha+\beta))\), where
\begin{gather*}
  \phi_1=\frac{M_3\sqrt\pi}{\zeta_3},\qquad
  \phi_2=\frac{\pi}{\zeta_4}\left(M_4-\frac{\Lambda H}{2s^2}\right),\\
  \alpha,\beta=\frac12\left(\pm\phi_1+\sqrt{4\phi_2-3\phi_1^2}\right).
\end{gather*}
A direct computation gives
\(\phi_2-\phi_1^2=\pi(A-\Lambda H/(2s^2\zeta_4))=\mathcal D(q)\) with
\(\pi=1-t\).

\begin{proof}[Proof of Proposition~\ref{prop:prd}]
The pairs are symmetric about \(\mu\), so their covariance is
\(\sum_i\sum_k2q_kL_iL_i^\top/(2sq_k)=LL^\top\), their third moments vanish,
and their fourth moments summed over coordinates equal
\(\sum_{j,i,k}2q_kL_{ji}^4/(4s^2q_k^2)=\Lambda H/(2s^2)\). The \(Z\)
component must therefore satisfy \(\pi\E[T]=0\), \(\pi\E[T^2]=1\),
\(\pi\E[T^3]=M_3/\zeta_3\), and
\(\pi\E[T^4]=(M_4-\Lambda H/(2s^2))/\zeta_4\), and Lemma~\ref{lem:pearson}
shows that such a component exists if and only if
\(\pi(A-\Lambda H/(2s^2\zeta_4))=\mathcal D(q)\ge1\). When
\(\mathcal D(q)\ge1\), the quantities \(\alpha\) and \(\beta\) are real and
positive with \(\alpha\beta=\mathcal D(q)\) and \(\alpha-\beta=\phi_1\), and
the three point law above is the construction of Lemma~\ref{lem:pearson} for
these moments. Conversely, \(\omega_1,\omega_2\ge0\) forces
\(\alpha,\beta>0\), and \(\omega_0\ge0\) then forces
\(\mathcal D(q)=\alpha\beta\ge1\). This proves part (a).

For part (b), \(H\ge s^2/\sum_kq_k=2ns^2/t\) by the arithmetic harmonic
mean inequality, with equality exactly when all \(q_k\) are equal, and
substitution in \(\mathcal D(q)\), whose factor \(1-t\) is positive, yields
the bound.

For part (c), let \(G(t)=(1-t)(A-c/t)\) on \(0<t<1\). When \(A\le c\), the
function \(G\) is negative, whereas for \(A>c\) it is strictly concave with
\(G''(t)=-2c/t^3\), has the unique maximiser \(t^*=\sqrt{c/A}\in(0,1)\), and
attains \(G(t^*)=(\sqrt A-\sqrt c)^2\). By part (b), some \(q\) gives
\(\mathcal D(q)\ge1\) if and only if \(A>0\) and \(\sqrt A\ge1+\sqrt c\),
with equal \(q_k\) attaining the maximum. Since \(c=n\Lambda/\zeta_4\) does
not involve \(s\), neither the condition nor \(t^*\) depends on \(s\).
\end{proof}

\begin{proof}[Proof of Corollary~\ref{cor:diagZ}]
For part (i), write
\(\Sigma-\rho^2\sigma\sigma^\top=\Sigma^{1/2}(I-\rho^2\xi\xi^\top)\Sigma^{1/2}\)
with \(\xi=\Sigma^{-1/2}\sigma\), which is positive definite if and only if
\(\rho^2\xi^\top\xi<1\).

For part (ii), \(R_{jj}=(1-\rho^2)\sigma_j^2\),
\(\zeta_3=\rho^3\sum_j\sigma_j^3\), and \(\zeta_4=\rho^4\sum_j\sigma_j^4\),
so \(A=\bar K/\rho^4-\bar S^2/\rho^6\), while Proposition~\ref{prop:floor}
gives \(c\ge\sum_jR_{jj}^2/\zeta_4=((1-\rho^2)/\rho^2)^2\). Feasibility
requires \(\sqrt A\ge1+\sqrt c\ge1/\rho^2\), and squaring gives
\eqref{eq:rhomin} after multiplication by \(\rho^6\).

For part (iii), \(c=n\Lambda/\zeta_4=(1+\eta)((1-\rho^2)/\rho^2)^2\), and
\eqref{eq:attained} is \eqref{eq:prd} for this value. When \(\eta=0\), the
right hand side of \eqref{eq:attained} equals \(1/\rho^2\), so the
calculation of part (ii) becomes an equivalence.
\end{proof}

\subsection{Two asset results}\label{app:two}

Throughout this subsection, \(\Sigma\) has unit diagonal and correlation
\(r=\cos\delta\) with \(\delta\in(0,\pi)\), \(L(\tau)\) is the factor in
\eqref{eq:two-Lt}, and we write \(p_1=\cos^2(\tau+\delta)\),
\(\bar p_1=\sin^2(\tau+\delta)\), \(p_2=\cos^2\tau\), and
\(\bar p_2=\sin^2\tau\) for its squared entries, together with
\(\theta=2\tau+\delta\) and \(x=\cos\theta\).

\begin{lemma}[Scaling]\label{lem:two-scale}
Let \(W\) be a positive diagonal matrix. The map \(L\mapsto WL\) is a
bijection between the factors of \(\Sigma\) and those of \(W\Sigma W\), and
replacing the targets \((m_3,m_4)\) by \((W^3m_3,W^4m_4)\) leaves \(u\),
\(w\), and \(d\) unchanged.
\end{lemma}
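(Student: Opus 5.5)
The plan is to verify the two claims separately: the bijection between factors, and the invariance of \(u\), \(w\), \(d\) under the rescaling of targets.

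\emph{The bijection.} If \(LL^\top=\Sigma\), then \((WL)(WL)^\top=W\Sigma W\). Since \(W\) is invertible, the map \(L'\mapsto W^{-1}L'\) sends factors of \(W\Sigma W\) back to factors of \(\Sigma\). The two maps are mutually inverse, so \(L\mapsto WL\) is a bijection.

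\emph{The entrywise powers.} Write \(W=\operatorname{diag}(\omega_j)\). Scaling row \(j\) of \(L\) by \(\omega_j>0\) scales row \(j\) of the entrywise \(k\)th power by \(\omega_j^k\). Hence \((WL)\had k=W^kL\had k\) for \(k=3,4\). In particular, \(L\had k\) and \((WL)\had k\) are nonsingular together.

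\emph{Invariance of \(u\), \(w\), \(d\).} The rescaled targets give
\[
  u'=\bigl((WL)\had3\bigr)^{-1}W^3m_3=(L\had3)^{-1}W^{-3}W^3m_3=u,
\]
and in the same way \(w'=w\). Since \(d=w-u\had2\) is built from \(u\) and \(w\) alone, \(d\) is unchanged as well.

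\emph{Why this matters.} Feasibility in Theorem~\ref{thm:blocks} depends only on \(d\), so feasibility is preserved under the rescaling. Taking \(W=\operatorname{diag}(1/\sigma_j)\) sends \(m_{3j}\) to \(S_j\) and \(m_{4j}\) to \(K_j\), which justifies the reduction to unit variances.

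There is no real obstacle here. The only point needing care is the identity \((WL)\had k=W^kL\had k\), which holds because \(W\) is diagonal; this is exactly why the scaling must be diagonal rather than a general invertible matrix.
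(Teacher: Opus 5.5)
Your proposal is correct and follows essentially the same route as the paper: the identity \((WL)(WL)^\top=W\Sigma W\) together with the inverse map \(L'\mapsto W^{-1}L'\) gives the bijection, and \((WL)\had k=W^kL\had k\) makes the factor \(W^k\) cancel in the systems defining \(u\) and \(w\), hence also \(d\). You also make explicit that \(L\had k\) and \((WL)\had k\) are nonsingular together, which the paper leaves implicit.
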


\begin{proof}
The identity \((WL)(WL)^\top=W\Sigma W\) and the inverse map
\(L'\mapsto W^{-1}L'\) prove the bijection. Since
\((WL)\had k=W^kL\had k\) for \(k=3,4\), the factor \(W^k\) cancels from both
sides of the systems that define \(u\) and \(w\).
\end{proof}

\begin{proposition}[Determinants and closed forms]\label{prop:two-skew-angle}
For every \(\tau\),
\begin{equation}\label{eq:two-sumdiff}
  p_1+p_2=1+rx,\qquad
  p_1-p_2=-\sin\delta\sin\theta,\qquad
  p_1\bar p_2+p_2\bar p_1=\tfrac12(2-r^2-x^2),
\end{equation}
and the entrywise power determinants are
\begin{align}
  D_3&=\det L(\tau)\had3
  =-\frac14\sin\delta\left(\sin^2\delta+3\sin^2\theta\right),\notag\\
  D_4&=\det L(\tau)\had4
  =-\frac12\sin\delta\sin\theta\left(2-r^2-x^2\right).\label{eq:two-skew-D4}
\end{align}
Hence \(L(\tau)\had3\) is always nonsingular, while \(L(\tau)\had4\) is
singular precisely when \(\sin\theta=0\), where
\(p_1=p_2=\frac12(1\pm r)\). For \(S,K\in\R^2\),
\begin{equation}\label{eq:two-skew-u}
  u_1=\frac{\sin^3\tau\,S_1-\sin^3(\tau+\delta)\,S_2}{D_3},\qquad
  u_2=\frac{\cos^3(\tau+\delta)\,S_2-\cos^3\tau\,S_1}{D_3},
\end{equation}
and, when \(\sin\theta\ne0\),
\begin{equation}\label{eq:two-skew-w}
  w_1=\frac{\bar p_2^2K_1-\bar p_1^2K_2}{D_4},\qquad
  w_2=\frac{p_1^2K_2-p_2^2K_1}{D_4}.
\end{equation}
If \(K_1=K_2=K\), then
\begin{equation}\label{eq:two-skew-weq}
  w_1=\frac{2K(1-rx)}{2-r^2-x^2},\qquad
  w_2=\frac{2K(1+rx)}{2-r^2-x^2}.
\end{equation}
\end{proposition}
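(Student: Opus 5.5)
The plan is to reduce everything to trigonometric identities in the two angles \(\tau\) and \(\tau+\delta\). We use \(\theta=2\tau+\delta\) and the half-angle forms \(\cos^2\alpha=\frac12(1+\cos2\alpha)\) and \(\sin^2\alpha=\frac12(1-\cos2\alpha)\).

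\textbf{The three identities in \eqref{eq:two-sumdiff}.} Sum-to-product gives
\[
p_1+p_2=1+\tfrac12\bigl(\cos(2\tau+2\delta)+\cos2\tau\bigr)=1+\cos\theta\cos\delta=1+rx,
\]
\[
p_1-p_2=\tfrac12\bigl(\cos(2\tau+2\delta)-\cos2\tau\bigr)=-\sin\theta\sin\delta .
\]
For the third identity, note that \(\bar p_i=1-p_i\). Hence
\[
p_1\bar p_2+p_2\bar p_1=(p_1+p_2)-2p_1p_2 .
\]
Writing \(4p_1p_2=(p_1+p_2)^2-(p_1-p_2)^2\) and substituting \(\sin^2\delta=1-r^2\) and \(\sin^2\theta=1-x^2\), the expression reduces to \(\tfrac12(2-r^2-x^2)\).

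\textbf{The determinants.} Expanding,
\[
D_4=p_1^2\bar p_2^2-\bar p_1^2p_2^2=(p_1\bar p_2-\bar p_1p_2)(p_1\bar p_2+\bar p_1p_2).
\]
The first factor simplifies: \(p_1\bar p_2-\bar p_1p_2=p_1-p_2\), because the \(p_1p_2\) terms cancel. Both factors are now known from \eqref{eq:two-sumdiff}, which gives \eqref{eq:two-skew-D4} directly.

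For \(D_3\), write \(c_1,s_1\) for the cosine and sine of \(\tau+\delta\), and \(c_2,s_2\) for those of \(\tau\). Then
\[
D_3=c_1^3s_2^3-s_1^3c_2^3=(c_1s_2-s_1c_2)\bigl(c_1^2s_2^2+c_1s_2s_1c_2+s_1^2c_2^2\bigr).
\]
The first factor is \(\sin(\tau-(\tau+\delta))=-\sin\delta\). In the second factor:
\begin{itemize}
\item \(c_1^2s_2^2+s_1^2c_2^2=p_1\bar p_2+p_2\bar p_1=\tfrac12(2-r^2-x^2)\);
\item \(c_1s_1c_2s_2=\tfrac14\sin(2\tau+2\delta)\sin2\tau=\tfrac18(\cos2\delta-\cos2\theta)\).
\end{itemize}
Converting to squares with \(\cos2\delta=1-2\sin^2\delta\) and \(\cos2\theta=1-2\sin^2\theta\), and again using \(1-r^2=\sin^2\delta\) and \(1-x^2=\sin^2\theta\), the second factor should reduce to \(\tfrac14(\sin^2\delta+3\sin^2\theta)\). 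This bookkeeping is the main place where constants can slip, so I will verify it at \(\theta=0\): there the factor is \(\tfrac14\sin^2\delta\), and \(p_1=p_2\) allows a direct check.

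\textbf{Nonsingularity.} Since \(\delta\in(0,\pi)\), we have \(\sin\delta>0\), so \(D_3\neq0\). Also \(2-r^2-x^2\ge1-r^2>0\), so \(D_4\) vanishes exactly when \(\sin\theta=0\). In that case \(x=\pm1\) and \(p_1=p_2=\tfrac12(1\pm r)\) by the first two identities.

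\textbf{Closed forms.} Formulas \eqref{eq:two-skew-u} and \eqref{eq:two-skew-w} are Cramer's rule applied to \(L\had3u=S\) and \(L\had4w=K\), using the \(2\times2\) adjugate. The rows are \((c_1^3,s_1^3)\) and \((c_2^3,s_2^3)\) for the cubic system, and \((p_1^2,\bar p_1^2)\) and \((p_2^2,\bar p_2^2)\) for the quartic system.

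\textbf{The equal-kurtosis case \eqref{eq:two-skew-weq}.} Set \(K_1=K_2=K\). The numerator of \(w_1\) is
\[
K(\bar p_2^2-\bar p_1^2)=K(p_1-p_2)(\bar p_1+\bar p_2)=K(p_1-p_2)(1-rx).
\]
Dividing by \(D_4=(p_1-p_2)\cdot\tfrac12(2-r^2-x^2)\) cancels the factor \(p_1-p_2\), which is nonzero whenever \(\sin\theta\ne0\). This leaves the stated \(w_1\). The same steps give \(w_2\), with \(p_1+p_2=1+rx\) in place of \(\bar p_1+\bar p_2\).
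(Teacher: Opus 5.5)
Your proposal is correct and follows essentially the same route as the paper: sum-to-product for \eqref{eq:two-sumdiff}, the factorisation $D_4=(p_1-p_2)(p_1\bar p_2+p_2\bar p_1)$, a difference-of-cubes factorisation of $D_3$, Cramer's rule, and cancellation of $p_1-p_2$ in the equal-kurtosis case. The only variation is in the quadratic factor of $D_3$. With $z_1=c_1s_2$ and $z_2=s_1c_2$ in your notation, the paper writes it as $\frac14\bigl(3(z_1+z_2)^2+(z_1-z_2)^2\bigr)$ using $z_1+z_2=\sin\theta$, whereas you evaluate $z_1^2+z_2^2$ and $z_1z_2$ separately. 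Your hedged reduction does go through, because $\frac12(\sin^2\delta+\sin^2\theta)+\frac14(\sin^2\theta-\sin^2\delta)=\frac14(\sin^2\delta+3\sin^2\theta)$, so the spot check at $\theta=0$ is not needed.
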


\begin{proof}
The identities \eqref{eq:two-sumdiff} follow from the product to sum
formulas. Let \(z_1=\cos(\tau+\delta)\sin\tau\) and
\(z_2=\sin(\tau+\delta)\cos\tau\), so that \(z_1-z_2=-\sin\delta\),
\(z_1+z_2=\sin\theta\), and
\(D_3=z_1^3-z_2^3=\frac14(z_1-z_2)\left(3(z_1+z_2)^2+(z_1-z_2)^2\right)\),
which is the formula for \(D_3\). Next,
\(D_4=p_1^2\bar p_2^2-\bar p_1^2p_2^2=(p_1-p_2)(p_1\bar p_2+p_2\bar p_1)\),
and \eqref{eq:two-sumdiff} gives \eqref{eq:two-skew-D4}. Since \(\sin\delta>0\) and
\(2-r^2-x^2\ge1-r^2>0\), the singularity claims follow, and at
\(\sin\theta=0\) the first two identities in \eqref{eq:two-sumdiff} give
\(p_1=p_2=\frac12(1\pm r)\). Cramer's rule gives \eqref{eq:two-skew-u} and
\eqref{eq:two-skew-w}. When \(K_1=K_2=K\), the numerators in
\eqref{eq:two-skew-w} are \(K(\bar p_2-\bar p_1)(\bar p_2+\bar p_1)\) and
\(K(p_1-p_2)(p_1+p_2)\), which \eqref{eq:two-sumdiff} turns into
\(-K\sin\delta\sin\theta(1-rx)\) and \(-K\sin\delta\sin\theta(1+rx)\), and
division by \(D_4\) gives \eqref{eq:two-skew-weq}.
\end{proof}

\begin{proof}[Proof of Lemma~\ref{lem:two-angle}]
The rows of any factor are unit vectors with inner product \(r\), so their
angles differ by \(\delta\) up to a sign, and a sign change of the second
column reverses both angles and hence this sign, after which the factor is
\(L(\tau)\) for some \(\tau\). A simultaneous sign change of both columns
replaces \(\tau\) by \(\tau+\pi\), so \([0,\pi)\) covers the whole class. A
permutation \(P\) and signs \(\varepsilon\in\{-1,1\}^2\) act by
\(L\mapsto LP\operatorname{diag}(\varepsilon)\), which maps
\(u\mapsto\operatorname{diag}(\varepsilon)P^\top u\), \(w\mapsto P^\top w\),
and \(d\mapsto P^\top d\), so nonsingularity and condition~\eqref{eq:feasible}
are unaffected. Proposition~\ref{prop:two-skew-angle} gives the remaining
claims, because for \(\tau\in[0,\pi)\) the equation \(\sin(2\tau+\delta)=0\)
holds exactly at \(\tau_0\) and \(\tau_0+\pi/2\), where \(L(\tau_0)\) has
columns \(\sin(\delta/2)(-1,1)^\top\) and \(\cos(\delta/2)(1,1)^\top\), the
eigenvectors of \(\Sigma\) scaled by \(\sqrt{1-r}\) and \(\sqrt{1+r}\),
while \(L(\tau_0+\pi/2)\) has the same columns in reverse order up to sign.
\end{proof}

\begin{lemma}[Two component budget]\label{lem:two-skew-pair}
For \(d\in\R^2\), the following statements are equivalent.
\begin{enumerate}
\item[(i)] \(d_1>0\), \(d_2>0\), and \(1/d_1+1/d_2\le1\).
\item[(ii)] \(d_1>1\), \(d_2>1\), and \((d_1-1)(d_2-1)\ge1\).
\end{enumerate}
They imply \(d_1+d_2\ge4\), with equality only when \(d_1=d_2=2\).
\end{lemma}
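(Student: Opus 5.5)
The plan is to prove the equivalence by clearing denominators, and then derive the sum bound from (ii) with the AM--GM inequality.

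For (i) \(\Rightarrow\) (ii), I start from \(d_1,d_2>0\) and \(1/d_1+1/d_2\le1\). Since \(1/d_2>0\), this gives \(1/d_1<1\), so \(d_1>1\). By symmetry \(d_2>1\). Multiplying the budget inequality by \(d_1d_2>0\) gives \(d_1+d_2\le d_1d_2\). This rearranges to \(d_1d_2-d_1-d_2+1\ge1\), which is \((d_1-1)(d_2-1)\ge1\).

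For (ii) \(\Rightarrow\) (i), positivity is immediate from \(d_1,d_2>1\). Expanding \((d_1-1)(d_2-1)\ge1\) again gives \(d_1d_2\ge d_1+d_2\). Dividing by \(d_1d_2>0\) yields \(1/d_1+1/d_2\le1\).

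For the final claim, set \(e_i=d_i-1>0\). By AM--GM, \(e_1+e_2\ge2\sqrt{e_1e_2}\ge2\), hence \(d_1+d_2\ge4\). Equality requires both \(e_1=e_2\) and \(e_1e_2=1\), so \(e_1=e_2=1\), that is \(d_1=d_2=2\). Conversely, \(d=(2,2)\) satisfies (ii) with equality.

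There is no real obstacle here. The only point that needs care is the strict inequalities \(d_i>1\) in the direction (i) \(\Rightarrow\) (ii): they come from the strict positivity of the other reciprocal, not from the budget alone.
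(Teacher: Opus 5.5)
Your proposal is correct and follows essentially the same route as the paper: clear denominators (using positivity, with $d_i>1$ obtained from the strict positivity of the other reciprocal) for the equivalence. Then apply AM--GM to $d_i-1$, namely $\tfrac12(d_1+d_2)-1\ge\sqrt{(d_1-1)(d_2-1)}\ge1$, for the bound $d_1+d_2\ge4$ and its equality case.
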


\begin{proof}
Under (i), \(1/d_i<1\) gives \(d_i>1\), and for positive \(d_i\) the
inequality \(1/d_1+1/d_2\le1\) reads \(d_1d_2-d_1-d_2+1\ge1\), so (i) and
(ii) are equivalent. Under (ii), the arithmetic geometric mean inequality
gives \(\frac12(d_1+d_2)-1\ge\sqrt{(d_1-1)(d_2-1)}\ge1\), with equality
throughout only when \(d_1-1=d_2-1=1\).
\end{proof}

\begin{proof}[Proof of Theorem~\ref{thm:two-sym}]
If a feasible factor exists, a matching distribution exists and each
marginal has unit variance, so \(K_j\ge(\E[(X_j-\mu_j)^2])^2=1\). If
\(K_1=K_2=1\), then \((X_j-\mu_j)^2=1\) almost surely for both coordinates,
so the centre has probability zero and within every component
\(T_i^2L_{1i}^2=T_i^2L_{2i}^2=1\) almost surely, which gives
\(L_{1i}^2=L_{2i}^2\) for both columns and makes the rows of \(L\had4\)
identical, contrary to nonsingularity. This proves necessity.

For sufficiency, \(S=0\) gives \(u=0\) and \(d=w\), and when
\(L(\tau)\had4\) is nonsingular \(w\) is the unique solution of
\begin{equation}\label{eq:two-rows}
  p_j^2w_1+\bar p_j^2w_2=K_j,\qquad j=1,2.
\end{equation}
We use the identity
\begin{equation}\label{eq:two-identity}
  \frac{a^2}{p}+\frac{(1-a)^2}{1-p}=1+\frac{(a-p)^2}{p(1-p)},\qquad
  p\in(0,1),\quad a\in\R,
\end{equation}
which follows by clearing denominators, together with
\(p_1=p_2=\frac12(1-r)\) and \(\bar p_1=\bar p_2=\frac12(1+r)\) at
\(\tau_0=(\pi-\delta)/2\). We also note that \(p_1=p_2\) together with
\(p_1(1-p_1)=0\) would force \(|r|=1\), because \(p_1=1\) gives
\(p_2=r^2\) and \(p_1=0\) gives \(\cos\tau=\cos(\tau+\delta)=0\).

Assume \(K_1<K_2\), let \(\kappa=K_2/K_1-1>0\), and define
\(\Phi(\tau)=p_1-p_2-\sqrt{\kappa p_1\bar p_1}\). At \(\tau_A=\pi-\delta\)
we have \(p_1=1\), \(\bar p_1=0\), and \(p_2=r^2\), so
\(\Phi(\tau_A)=1-r^2>0\), whereas
\(\Phi(\tau_0)=-\frac12\sqrt{\kappa(1-r^2)}<0\). By continuity some \(\tau\)
between \(\tau_0\) and \(\tau_A\) satisfies \(p_1-p_2=\sqrt{\kappa p_1\bar p_1}\),
and at this angle \(p_1\bar p_1>0\), since otherwise \(p_1=p_2\) and
\(|r|=1\), so \(p_1>p_2\) and \(L(\tau)\had4\) is nonsingular. Set
\(w_1=K_1/p_1\) and \(w_2=K_1/\bar p_1\). The first equation in
\eqref{eq:two-rows} holds because \(p_1+\bar p_1=1\), and the second holds
because \eqref{eq:two-identity} with \(a=p_2\) and \(p=p_1\) together with
this root equation give
\(K_1(p_2^2/p_1+\bar p_2^2/\bar p_1)=K_1(1+\kappa)=K_2\). Hence \(d=w\) is
the vector in part (a), \(\sum_i1/d_i=(p_1+\bar p_1)/K_1=1/K_1\le1\), and
\(L(\tau)\) is feasible by Theorem~\ref{thm:blocks}. The case \(K_1>K_2\)
follows by relabelling the two assets, which exchanges the rows of \(L\) and
leaves \(\Sigma\) unchanged.

Finally, let \(K_1=K_2=K>1\), fix \(\kappa\in(0,\min\{1-r^2,K-1\}]\), and
define \(\Psi(\tau)=(p_1-p_2)^2-\kappa(p_1+p_2)(2-p_1-p_2)\). At
\(\tau_*=\pi/4-\delta/2\) we have \(\theta=\pi/2\), so \(p_1+p_2=1\) and
\(p_1-p_2=-\sin\delta\) by \eqref{eq:two-sumdiff}, giving
\(\Psi(\tau_*)=1-r^2-\kappa\ge0\), whereas
\(\Psi(\tau_0)=-\kappa(1-r^2)<0\). By continuity some \(\tau\) between
\(\tau_0\) and \(\tau_*\) satisfies \(\Psi(\tau)=0\), and there
\(p_1\ne p_2\), since \(p_1=p_2\) would force \(p_1(1-p_1)=0\) and hence
\(|r|=1\), so \(L(\tau)\had4\) is nonsingular. Write \(e=\frac12(p_1+p_2)\)
and \(\varrho=\frac12(p_1-p_2)\ne0\), so that \(\Psi(\tau)=0\) reads
\(\varrho^2=\kappa e(1-e)\) and gives \(e\in(0,1)\), and set
\(\nu=K/(1+\kappa)\), \(w_1=\nu/e\), and \(w_2=\nu/(1-e)\). Since
\(p_j-e=\pm\varrho\), identity \eqref{eq:two-identity} with \(a=p_j\) and
\(p=e\) gives
\(p_j^2w_1+\bar p_j^2w_2=\nu(1+\varrho^2/(e(1-e)))=\nu(1+\kappa)=K\) for
both rows, so \(d=w\), \(d_i\ge\nu\), and
\(\sum_i1/d_i=(e+1-e)/\nu=(1+\kappa)/K\le1\).
\end{proof}

\begin{remark}[The limiting case \texorpdfstring{\(K=(1,1)\)}{K=(1,1)}]\label{rem:two-11}
This target is attainable by a column component distribution, although
every matching factor has singular \(L\had4\). With the columns
\(L_1=\ell_-(-1,1)^\top\) and \(L_2=\ell_+(1,1)^\top\) of \(L(\tau_0)\),
where \(\ell_\pm=\sqrt{(1\pm r)/2}\), assign probability \(\ell_-^2\) to the
first component, \(\ell_+^2\) to the second, and nothing to the centre, and
within each component take the displacements \(\pm1/\ell_-\) or
\(\pm1/\ell_+\) with equal conditional probabilities. The support is then
\(\pm(-1,1)\) with total probability \((1-r)/2\) and \(\pm(1,1)\) with total
probability \((1+r)/2\), so both marginals are symmetric two point laws on
\(\{-1,1\}\) and the correlation is \(r\).
\end{remark}

\begin{proof}[Proof of Proposition~\ref{prop:two-symroot}]
The symmetric root has diagonal entries \(\chi_1\) and off diagonal
entries \(\chi_2\), where \(\chi_1=\frac12(\sqrt{1+r}+\sqrt{1-r})\) and
\(\chi_2=\frac12(\sqrt{1+r}-\sqrt{1-r})\), so that \(\chi_1^2+\chi_2^2=1\), \(2\chi_1\chi_2=r\), and
\(\chi_1>|\chi_2|\), which makes both entrywise powers nonsingular. When \(K_1=K_2=K\), the system
\((\Sigma^{1/2})\had4w=K\mathbf1\) is solved by the vector with equal
entries \(K/(\chi_1^4+\chi_2^4)\), and
\(\chi_1^4+\chi_2^4=1-2\chi_1^2\chi_2^2=1-r^2/2\) gives
\(w_1=w_2=2K/(2-r^2)\). If \(S=0\), condition~\eqref{eq:feasible} reads
\(2(2-r^2)/(2K)\le1\), which is \(K\ge2-r^2\).

For general \(S\), Cramer's rule gives
\(u_1=(\chi_1^3S_1-\chi_2^3S_2)/(\chi_1^6-\chi_2^6)\) and
\(u_2=(\chi_1^3S_2-\chi_2^3S_1)/(\chi_1^6-\chi_2^6)\), so
\[
  |u_i|
  \le\frac{\|S\|_\infty(\chi_1^3+|\chi_2|^3)}{\chi_1^6-\chi_2^6}
  =\frac{\|S\|_\infty}{\chi_1^3-|\chi_2|^3}
  =\frac{\|S\|_\infty}{\sqrt{1-|r|}\,(1+|r|/2)},
\]
where the last step is a direct computation in which \(|\chi_2|\) is
obtained from \(\chi_2\) by replacing \(r\) with \(|r|\). The displayed
sufficient condition therefore gives \(d_i=w_i-u_i^2\ge2\) for both
components, and \eqref{eq:feasible} follows.
\end{proof}

\begin{proposition}[Independent assets with equal kurtosis]\label{prop:two-skew-rzero}
Let \(r=0\), \(K_1=K_2=K\), and \(S\in\R^2\). Write \(y=\cos^2\tau\sin^2\tau\), \(U=1-2y\), and \(V=1-3y\). Then
\begin{gather*}
  u=\frac1V\begin{pmatrix}\cos^3\tau\,S_2-\sin^3\tau\,S_1\\
  \cos^3\tau\,S_1+\sin^3\tau\,S_2\end{pmatrix},\qquad
  \|u\|_2^2=\frac{\|S\|_2^2}{V},\\
  w=\frac KU\begin{pmatrix}1\\1\end{pmatrix}\quad(y\ne1/4),
\end{gather*}
and consequently
\begin{equation}\label{eq:two-skew-sumdiff}
  d_1+d_2=\frac{2K}{U}-\frac{\|S\|_2^2}{V}.
\end{equation}
\end{proposition}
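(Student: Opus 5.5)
Set \(r=0\), so that \(\delta=\pi/2\). Then \(L(\tau)\) has rows \((-\sin\tau,\cos\tau)\) and \((\cos\tau,\sin\tau)\). Its squared entries are \(p_1=\sin^2\tau\), \(\bar p_1=\cos^2\tau\), \(p_2=\cos^2\tau\), and \(\bar p_2=\sin^2\tau\). The plan is to specialise Proposition~\ref{prop:two-skew-angle} to this case rather than solve the systems afresh.

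For \(w\), the simplest route is \eqref{eq:two-skew-weq}. With \(r=0\) it gives \(w_1=w_2=2K/(1-x^2)\), where \(x=\cos\theta\) and \(\theta=2\tau+\pi/2\). Then \(x=-\sin2\tau\), so \(x^2=4y\), and therefore \(w_i=2K/(2-4y)=K/U\). Singularity occurs exactly when \(\sin\theta=\cos2\tau=0\), that is, when \(y=1/4\). This matches the stated exclusion.

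For \(u\), specialise \eqref{eq:two-skew-u} using \(\sin(\tau+\pi/2)=\cos\tau\) and \(\cos(\tau+\pi/2)=-\sin\tau\). This gives numerators \(\sin^3\tau\,S_1-\cos^3\tau\,S_2\) and \(-\sin^3\tau\,S_2-\cos^3\tau\,S_1\). The determinant is \(D_3=-\frac14(1+3\cos^22\tau)\). Since \(\cos^22\tau=1-4y\), we get \(D_3=-\frac14(4-12y)=-V\). Dividing the numerators by \(-V\) yields exactly the displayed \(u\).

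Next, compute \(\|u\|_2^2\). Expanding the squares, the cross terms \(\pm2\sin^3\tau\cos^3\tau\,S_1S_2\) cancel. What remains is \((\sin^6\tau+\cos^6\tau)\|S\|_2^2/V^2\). The standard identity \(\sin^6+\cos^6=1-3\sin^2\cos^2=V\) then gives \(\|S\|_2^2/V\). Note that \(V\ge1/4>0\), so no division issue arises.

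Finally, \(d_1+d_2=w_1+w_2-\|u\|_2^2=2K/U-\|S\|_2^2/V\), which is \eqref{eq:two-skew-sumdiff}. The only delicate step is the sign and angle bookkeeping when specialising \eqref{eq:two-skew-u}. As a safeguard, I would check the result directly against the system \(L\had3u=S\) at \(\tau=0\), where \(u=(S_2,S_1)\) and the columns of \(L(0)\) are swapped relative to the identity.
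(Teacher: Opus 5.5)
Your proposal is correct and follows the paper's proof essentially step by step. Like the paper, you specialise \eqref{eq:two-skew-u} and \eqref{eq:two-skew-weq} at \(\delta=\pi/2\), use \(D_3=-V\) and \(x^2=4y\), cancel the cross terms in \(\|u\|_2^2\), and apply \(\cos^6\tau+\sin^6\tau=V\). One small slip: with \(r=0\), \eqref{eq:two-skew-weq} gives \(w_i=2K/(2-x^2)\), not \(2K/(1-x^2)\), although your next step \(2K/(2-4y)\) already uses the correct denominator.
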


\begin{proof}
For \(r=0\) we have \(\delta=\pi/2\), \(\cos(\tau+\delta)=-\sin\tau\),
\(\sin(\tau+\delta)=\cos\tau\), \(\sin^2\theta=\cos^22\tau=1-4y\), hence
\(D_3=-(1-3y)=-V\), and \(x^2=4y\), so \eqref{eq:two-skew-u} and
\eqref{eq:two-skew-weq} give \(u\) and \(w\), while the cross terms cancel in the sum of the squared entries of
\(Vu\), which therefore equals
\((\cos^6\tau+\sin^6\tau)\|S\|_2^2=V\|S\|_2^2\) and gives the norm
identity. Summing \(d_i=w_i-u_i^2\) gives
\eqref{eq:two-skew-sumdiff}, and \(y=1/4\) is the singular case
\(\sin\theta=0\) of Proposition~\ref{prop:two-skew-angle}.
\end{proof}

\begin{proof}[Proof of Proposition~\ref{prop:two-fail}]
By \eqref{eq:two-skew-sumdiff}, \(d_1+d_2=5/(1-2y)-2/(1-3y)\) at every
nonsingular angle, and \(d_1+d_2<4\) is equivalent, after multiplication by
the positive denominators, to \(24y^2-9y+1>0\), which holds for every \(y\)
because the discriminant is negative. Lemma~\ref{lem:two-skew-pair}
therefore rules out feasibility. For attainability,
Theorem~\ref{thm:blocks} in one dimension with \(L=1\), \(u=1\),
\(w=5/2\), and \(d=3/2\) gives \(a=\frac12(1+\sqrt7)\),
\(b=\frac12(\sqrt7-1)\), and the three point law displayed in the
proposition, and two independent copies have covariance \(I\) together with
the required marginal moments.
\end{proof}

\begin{proof}[Proof of Theorem~\ref{thm:two-skew-pca}]
At \(\tau_0\) we have \(\cos\tau_0=\sin(\delta/2)\),
\(\sin\tau_0=\cos(\delta/2)\), \(\cos(\tau_0+\delta)=-\sin(\delta/2)\),
\(\sin(\tau_0+\delta)=\cos(\delta/2)\), and \(D_3=-\frac14\sin^3\delta\),
so \eqref{eq:two-skew-u} gives \(u_1(\tau_0)=-(S_1-S_2)/(2\sin^3(\delta/2))\)
and \(u_2(\tau_0)=(S_1+S_2)/(2\cos^3(\delta/2))\), and \(2\sin^2(\delta/2)=1-r\) and \(2\cos^2(\delta/2)=1+r\) give
\(u_1(\tau_0)^2=2\gamma_-^2/(1-r)\) and \(u_2(\tau_0)^2=2\gamma_+^2/(1+r)\).
Because the kurtoses are equal, \(w(\tau)\) is given by
\eqref{eq:two-skew-weq}, whose denominator is at least \(1-r^2\), so \(w\)
is continuous at \(\tau_0\) with value \((2K/(1-r),2K/(1+r))\) at \(x=-1\),
and consequently \(d(\tau)\) converges, as \(\tau\to\tau_0\), to
\(d^0=(2(K-\gamma_-^2)/(1-r),\,2(K-\gamma_+^2)/(1+r))\). The assumptions state that \(d^0>0\) and \(1/d_1^0+1/d_2^0<1\). Both strict
inequalities persist by continuity for every \(\tau\ne\tau_0\) in a
neighbourhood of \(\tau_0\), where \(L(\tau)\had4\) is nonsingular, and
Theorem~\ref{thm:blocks} gives feasibility.
\end{proof}

\bibliographystyle{elsarticle-harv}
\bibliography{references}

\end{document}

%% file: tab_rotation_gain.tex
\begin{tabular}{lrcccc}
\toprule
& & \multicolumn{2}{c}{GenUT} & \multicolumn{2}{c}{PRD}\\
\cmidrule(lr){3-4}\cmidrule(lr){5-6}
Data & \(n\) & Standard & Rotated & Standard & Rotated\\
\midrule
FTSE & 20 & 0/1 & 1/1 & 1/1 & 1/1 \\
IPSA 2010 & 35 & 0/1 & 1/1 & 0/1 & 1/1 \\
IPSA weekly & 33 & 0/33 & 33/33 & 10/33 & 32/33 \\
IPSA daily & 31 & 0/28 & 28/28 & 15/28 & 28/28 \\
\addlinespace
Synthetic & 5 & 10/30 & 30/30 & 30/30 & 30/30 \\
Synthetic & 10 & 0/30 & 30/30 & 9/30 & 30/30 \\
Synthetic & 20 & 0/30 & 30/30 & 0/30 & 30/30 \\
Synthetic & 40 & 0/30 & 30/30 & 0/30 & 30/30 \\
Synthetic & 80 & 0/12 & 12/12 & 0/12 & 12/12 \\
Synthetic & 160 & 0/12 & 12/12 & 0/12 & 12/12 \\
\midrule
All & & 10/207 & 207/207 & 65/207 & 206/207 \\
\bottomrule
\end{tabular}